\documentclass{aims}
\usepackage[pagewise]{lineno}
\usepackage{amsmath}
\usepackage{paralist}
\usepackage[misc]{ifsym}
\usepackage{epsfig} 
\usepackage{epstopdf} 
\usepackage[colorlinks=true]{hyperref}
\hypersetup{urlcolor=blue, citecolor=red}
\allowdisplaybreaks

\def\currentvolume{X}
 \def\currentissue{X}
  \def\currentyear{200X}
   \def\currentmonth{XX}
    \def\ppages{X-XX}
     \def\DOI{10.3934/xx.xxxxxxx}

\usepackage{bm}
\usepackage{accents}

\usepackage{cleveref}
\usepackage{tikz}
\usetikzlibrary{spy}
\usetikzlibrary {decorations.fractals,spy}
\usepackage{placeins}
\usepackage[colorinlistoftodos,prependcaption]{todonotes}
\usepackage{xcolor}
\usepackage{booktabs}
\usepackage[export]{adjustbox}
\usepackage{multirow}

\newcommand{\codecomment}[1]{{\color{gray} // #1}}

\renewcommand{\t}{^{\!\top}}
\newcommand{\K}{\mathcal{K}}

\newcommand{\R}{\mathbb{R}}
\newcommand{\ch}{\mathrm{ch}}
\newcommand{\pr}{\mathrm{pr}}

\renewcommand{\exp}{\mathrm{exp}}
\newcommand{\curr}{\mathrm{curr}}

\newcommand{\floor}[1]{\left\lfloor #1 \right\rfloor}

\DeclareMathOperator*{\argmax}{arg\,max}

\newcommand{\bigO}[1]{\mathcal{O}\!\left(#1\right)}

\renewcommand{\S}{\mathcal{S}}
\newcommand{\ke}{k_{\exp}}
\newcommand{\kc}{k_{\ch}}
\newcommand{\Se}{S_{\exp}}
\newcommand{\Sc}{S_{\ch}}
\newcommand{\bSe}{\bS_{\exp}}
\newcommand{\bSc}{\bS_{\ch}}
\newcommand{\tSe}{\widetilde{S}_{\exp}}
\newcommand{\tSc}{\widetilde{S}_{\ch}}

\newcommand{\bAe}{\bA_{\exp}}
\newcommand{\bAc}{\bA_{\ch}}
\newcommand{\sigmaE}{\sigma_{\exp}}
\newcommand{\sigmaC}{\sigma_{\ch}}
\newcommand{\bfae}{\bfa_{\exp}}
\newcommand{\bfac}{\bfa_{\ch}}
\newcommand{\cC}{c_\ch}
\newcommand{\cE}{c_\exp}
\newcommand{\bud}{b}

\newcommand{\epsII}{\epsilon}

\newcommand{\bA}{\mathbf{A}}
\newcommand{\bB}{\mathbf{B}}

\newcommand{\bD}{\mathbf{D}}

\newcommand{\bI}{\mathbf{I}}

\newcommand{\bS}{\mathbf{S}}

\newcommand{\bU}{\mathbf{U}}
\newcommand{\bV}{\mathbf{V}}

\newcommand{\bY}{\mathbf{Y}}

\newcommand{\bPhi}{\mathbf{\Phi}}
\newcommand{\bPsi}{\mathbf{\Psi}}

\newcommand{\bSigma}{\mathbf{\Sigma}}

\newcommand{\bfa}{\mathbf{a}}

\newcommand{\bfe}{\mathbf{e}}

\newcommand{\bfh}{\mathbf{h}}

\newcommand{\bfm}{\mathbf{m}}

\newcommand{\bfu}{\mathbf{u}}
\newcommand{\bfv}{\mathbf{v}}

\newcommand{\bfy}{\mathbf{y}}

\newcommand{\bfeps}{\boldsymbol{\varepsilon}}
\renewcommand{\b}[1]{\bm{#1}}

\makeatletter
\newcommand{\algorithmicparfor}{\textbf{parfor}}
\newcommand{\PARFOR}[1]{\ALC@it\algorithmicparfor\ #1\ \algorithmicdo}
\newcommand{\ENDPARFOR}{\ALC@it\algorithmicend\ \algorithmicparfor}
\makeatother

\usepackage{algorithm}
\usepackage{subcaption}
\usepackage{algorithmic}
\usepackage{amssymb}
\newcommand{\myparagraph}[1]{%
  \par\medskip
  \noindent\textbf{#1.}\quad
}

\newtheorem{theorem}{Theorem}[section]
\newtheorem{corollary}[theorem]{Corollary}

\newtheorem{lemma}[theorem]{Lemma}
\newtheorem{proposition}[theorem]{Proposition}

\theoremstyle{definition}

\title[Multifidelity sensor placement]
{Multifidelity sensor placement in Bayesian state estimation problems} 

\author[G. Ramon, G. Sarnoski, V. Tumuluri, H. D\'iaz and A. K. Saibaba  ]{}

\subjclass{Primary: 62K05, 93E10; Secondary: 65F30, 49N45.}
\keywords{Bayesian state estimation, sensor placement, D-optimality, multifidelity, greedy algorithms, optimal experimental design}

\thanks{$^*$Corresponding author: Hugo D\'iaz}

\begin{document}
\maketitle

\centerline{\scshape
Gabriela Ramon$^{{\href{mailto:gabriela.ramon1@marist.edu}{\textrm{\Letter}}}1}$

Geena Sarnoski$^{{\href{mailto:gsarnos1@students.towson.edu}{\textrm{\Letter}}}2}$

Vasishta Tumuluri$^{{\href{mailto:vtumulur@unc.edu}{\textrm{\Letter}}}3}$

Hugo D\'iaz$^{{\href{mailto:hdiazno@ncsu.edu}{\textrm{\Letter}}}*4}$

Arvind K.\ Saibaba$^{{\href{mailto:asaibab@ncsu.edu}{\textrm{\Letter}}}4}$
}

\medskip

{\footnotesize
 \centerline{$^1$School of Computer Science and Mathematics, Marist University, Poughkeepsie, NY, USA}
}

\medskip

{\footnotesize
 \centerline{$^2$Department of Mathematics, Towson University, Towson, MD, USA}
}
\medskip
{\footnotesize
 \centerline{$^3$Department of Statistics \& Operations Research, University of North Carolina, Chapel Hill, NC, USA}
}

\medskip

{\footnotesize
 \centerline{$^4$Department of Mathematics, North Carolina State University, NC, USA}
}

\bigskip

 \centerline{(Communicated by Handling Editor)}


\begin{abstract}
We study optimal sensor placement for Bayesian state estimation problems in which sensors vary in cost and fidelity, resulting in a budget-constrained multifidelity optimal experimental design problem. 
Sensor placement optimality is quantified using the D-optimality criterion, and the problem is approached by leveraging connections with the column subset selection problem in numerical linear algebra.
We implement a greedy approach for this problem, whose computational efficiency we improve using rank-one updates via the Sherman-Morrison formula. 
We additionally present an iterative algorithm that, for each feasible allocation of sensors, greedily optimizes over each sensor fidelity subject to previous sensor choices, repeating this process until a termination criterion is satisfied. 
To our knowledge, these algorithms are novel in the context of cost-constrained multifidelity sensor placement.
We evaluate our methods on several benchmark state estimation problems, including reconstructions of sea surface temperature and flow around a cylinder, and empirically demonstrate improved performance over random designs.
\end{abstract}


\section{Introduction}

In many inverse problems, from climate and weather prediction to medical imaging and fluid dynamics, the effectiveness of data collection strategies strongly influences the quality of the resulting estimates \cite{ocean_climate_observing, tsunami, mri_imaging, fluid1, fluid2}. Within these problems, the underlying system state is often only observable through noisy measurements, and it can be expensive or harmful to take many such measurements. Due to these budgetary and practical considerations, practitioners often rely on sparse and noisy measurements to reconstruct the state. Therefore, deciding where to place sensors and what type of sensors to use is a central challenge in these applications.

This challenge is often referred to as the sparse sensor selection problem, in which one seeks the most informative sensor configuration for reconstructing a system state from limited data \cite{9152984}. Previous work has examined this problem, assuming a fixed number of sensors of the same quality \cite{Attia_2022, eswar2025bayesiandoptimalexperimentaldesigns, article, kakasenko2025bridginggapdeterministicprobabilistic}. However, in practice, sensors often vary widely in quality and cost: high-precision sensors may be more accurate but significantly more expensive or power-hungry, while cheaper alternatives may provide noisier or biased measurements \cite{evaluation_metrics,biosensors,structural_damage}. These trade-offs highlight the need for multifidelity, cost-constrained sensor placement strategies, which have received comparatively little attention (see Related Work in \Cref{ssec:relatedwork}).

In this work, we define sensor fidelity as a function of both financial cost and signal-to-noise ratio (sensor variance).
We consider the multifidelity problem in which we have to place sensors with two different fidelities---cheap (less expensive with high variance) and expensive (more expensive with low variance).
Our objective is to determine the optimal spatial configuration of these sensors without exceeding a fixed global budget.
To determine the optimal placement, we adopt a Bayesian experimental design framework and aim to maximize the D-optimality criterion~\cite{alexanderian2023briefnotebayesiandoptimality}, which is equivalent to minimizing the uncertainty of the reconstruction and maximizing the expected information gain from the prior to the posterior distribution.
Because this exact 
optimization problem is known to be NP-hard (see, e.g.,~\cite{eswar2025bayesiandoptimalexperimentaldesigns}), we focus on  developing 
approximation 
algorithms.

In this paper, we introduce a greedy approach to this problem based on marginal gain 
per unit of cost, extending existing greedy approaches from the single-fidelity case. We formulate a computationally efficient implementation of this algorithm using symmetric rank-one updates upon the addition of each sensor. Additionally, we improve upon the greedy approach with an iterative selection algorithm that is novel in the context of multifidelity sensor placement and shows stronger empirical results, in terms of D-optimality, in our experiments.

\subsection{Outline and Contributions}
We summarize our main contributions as follows:
\begin{enumerate}

\item \textbf{Multifidelity Sensor Placement} (discussed in \Cref{sec:mfgreedy}): 
We formulate the cost-constrained multifidelity optimal sensor placement problem and draw connections to the knapsack problem and the column subset selection problem. 
\item {\textbf{Theoretical Properties} (discussed in \Cref{subsec:properties})}: 
We establish theoretical properties of the D-optimality objective function for our multifidelity sensor placement problem and note the advantages of a greedy approach in this context.
\item \textbf{Greedy Algorithm} (discussed in \Cref{subsec:mfgreedy,subsec:sherman,subsec:performance}): 
We extend the classical greedy sensor placement methods from the single-fidelity setting to a multifidelity framework using a heuristic based on marginal improvement of the objective normalized by sensor cost. We derive a computationally efficient implementation using rank-one updates via the Sherman--Morrison formula and provide a detailed complexity analysis. Our analysis shows that the proposed implementation improves computational efficiency relative to existing greedy approaches, but also that our adapted greedy algorithm does not admit a constant-factor approximation guarantee.

\item \textbf{Iterative Selection} (discussed in \Cref{sec:iterative}): 
To overcome the drawbacks of the greedy algorithm, we propose an iterative algorithm that alternates greedy optimization across sensor fidelities, refining the selection at each step until a termination criterion is met.
To the best of our knowledge, this iterative refinement strategy has not been previously studied in the context of cost-constrained multifidelity sensor placement.
We derive a computationally efficient implementation by exploiting structural properties of the Bayesian D-optimality objective and provide a corresponding complexity analysis.

\item \textbf{Numerical Experiments} (discussed in \Cref{sec:experiments}):
We compare our greedy and iterative selection algorithms against random designs using benchmark state estimation problems, including the reconstruction of sea surface temperature and flow around a cylinder. We explore the behavior of the greedy algorithm across different problem instances and theoretically corroborate our experimental findings via a Taylor approximation of the marginal improvement. Our experimental results indicate that the iterative approach performs at least as well as the greedy algorithm in terms of the D-optimality criterion, and that both perform considerably better than baseline random designs.
\end{enumerate}

The remainder of this paper is structured as follows.
In \Cref{sec:prelim}, we review relevant preliminaries from single-fidelity sensor placement and linear algebra. In \Cref{sec:mfgreedy}, we introduce the multifidelity sensor placement problem and suggest a greedy approach to find approximately optimal sensor locations, and in \Cref{sec:iterative}, we propose an iterative algorithm that repeatedly applies greedy selection across each fidelity. In \Cref{sec:experiments}, we present experimental results of our approaches applied to benchmark state estimation problems.

\subsection{Related Work}\label{ssec:relatedwork}
D-optimal sparse sensor selection has been well-studied in the single-fidelity case. Although finding an exact solution to this binary optimization problem is NP-hard~ \cite{eswar2025bayesiandoptimalexperimentaldesigns}, previous work has suggested various approaches to find approximately optimal solutions. 
Greedy approaches are quite popular for D-optimal sensor placement due to their ease of implementation and theoretical guarantees stemming from the submodularity of the objective function \cite{nemhauser1978analysis,5717225}. However, they can provide suboptimal designs, prompting the investigation of other methods. One common alternative is to consider the continuous relaxation of the binary optimization problem, using a regularization term for sparsity. The resulting continuous optimization problem is then solved using gradient-based optimization methods, and the weights in the optimal solution are thresholded to obtain approximately optimal binary designs for the original problem \cite{article}. One can also interpret the binary coefficients as Bernoulli random variables, enabling the use of stochastic optimization techniques, as described in \cite{Attia_2022}. This approach casts the regularized optimality criterion into an objective function in the form of an expectation over a multivariate Bernoulli distribution. Detailed in \cite{eswar2025bayesiandoptimalexperimentaldesigns} is another approach that views sensor placement through the lens of the Column Subset Selection Problem (CSSP) from numerical linear algebra, reformulating the objective function to cast optimal sensor placement as choosing the ``best" columns of a matrix, and we make use of an analogous reformulation for the multifidelity setting.

In contrast to the single-fidelity setting, multifidelity D-optimal sensor placement has seen much less study. The authors in \cite{9152984} suggest a greedy approach that uses column-pivoted QR decomposition for both initialization and refinement, and their experimental results indicate a promising foundation for further study. However, direct comparison with our approach is not straightforward because their formulation assumes that the number of cheap and expensive sensors is fixed in advance. Thus, their optimization problem focuses on determining sensor locations for a fixed allocation, whereas our formulation jointly optimizes both the sensor allocation and placement under a total budget constraint. The authors in \cite{cvxrelax} develop another approach that expresses the inverse of the Fisher information matrix as a sum of rank-one matrices to formulate a convex relaxation of the sensor placement problem, demonstrating strong empirical results. Proposed in \cite{Leskovec} is the CELF algorithm, a combination of greedy approaches shown to offer a constant-factor guarantee for a multifidelity sensor placement problem closely related to the setting we consider. However, beyond these works, the multifidelity case of D-optimal sensor placement seems to be relatively unexplored in the current literature. In this work, we focus on D-optimality due to its computational convenience and widespread use in previous optimal experimental design problems.

\section{Preliminaries}
\label{sec:prelim}
\subsection{Bayesian Inverse Problem}
\label{subsec:state}
We begin by formulating the Bayesian inverse problem that motivates our study of multifidelity sensor placement.
Let the state of a physical system be represented by a vector $\bfu \in \R^N$.
We assume that only a subset of its components can be observed through sensors located at $k < N$ spatial positions and that each measurement is corrupted by additive noise.
The observation model is
\begin{equation}
\bfy = \bS\t \bfu + \bfeps,
\qquad
\bfeps \sim \mathcal{N}(\b{0}, \bSigma_{\text{noise}}),
\label{eq:obs_model}
\end{equation}
where $\bfy \in \R^k$ denotes the vector of noisy measurements,
$\bS \in \{0,1\}^{N \times k}$ is a {selection matrix} that extracts the components of $\bfu$ corresponding to the sensor locations, and
$\bSigma_{\text{noise}}$
represents the sensor noise covariance, which we assume is diagonal. 

If the sensor locations correspond to an index set $\S = \{i_1,i_2,\hdots,i_k\} \subset \{1,\dots,N\}$, then $\bS = \begin{bmatrix}
    \bfe_{i_1} & \bfe_{i_2} & \cdots & \bfe_{i_k}
\end{bmatrix} \in \{0,1\}^{N \times k}$ consists of the columns of the $N\times N$ identity matrix $\bI_N$ indexed by $\S$ where $\bfe_i$ denotes the $i$-th unit vector in $\R^N$. Hence, $\bS\t \bfu$ extracts the components of $\bfu$ at the sensor locations $\S$.

\myparagraph{Dictionary-based representation}
Following the approach proposed in \cite{kakasenko2025bridginggapdeterministicprobabilistic}, we construct a data-driven reduced-order model to mitigate the high computational cost of high-dimensional inference. Specifically, we assume that the state $\bfu \in \R^N$ lies approximately in a low-dimensional subspace spanned by the columns of a dictionary matrix $\bPhi \in \R^{N \times \ell}$, where $\ell \ll N$. The basis $\bPhi$ is obtained from a set of training data (\emph{snapshots}) collected under varying conditions or time instances.
Let $\bY \in \R^{N \times p}$ denote the training data matrix whose columns correspond to these snapshots.
For simplicity of exposition, we assume that the data are mean-centered, i.e., the column mean of $\bY$ is zero\footnote{In the general case, one may apply the same procedure to the mean-centered data $\bY - \bar{\bfu}\mathbf{1}^\top$, reconstruct $\bfu - \bar{\bfu}$, and then add $\bar{\bfu}$ to obtain an estimate of $\bfu$.}\!.
A thin singular value decomposition (SVD) of $\bY$ is written as
\begin{equation*}
\bY = \bU_\bY \bSigma_\bY \bV_\bY\t,
\end{equation*}
and the reduced basis $\bPhi$ is formed by the first $\ell$ left singular vectors, that is, the first $\ell$ columns of $\bU_\bY$, which capture the dominant modes of variability in the data.
This construction yields a data-informed low-dimensional subspace suitable for efficient Bayesian inference.
The reduced-order approximation of the state is then
\begin{align*}
\bfu \approx \bPhi\bfm, \quad \bfm\in \R^\ell,
\end{align*}
where $\bfm$ represents the reduced coordinates of the state in the subspace spanned by $\bPhi$.
Substituting this approximation into the observation model \eqref{eq:obs_model} yields the reduced measurement equation
\begin{equation}
\bfy = \bS\t \bPhi \bfm + \bfeps.
\label{eq:reduced_obs}
\end{equation}
Note here we have ignored the error coming from the truncated SVD, but the formulation can be adjusted appropriately. 
\myparagraph{Bayesian formulation}
We impose a Gaussian prior on the reduced coordinates,
  $\bfm \sim \mathcal{N}(\b{0},\, \bSigma_{\mathrm{pr}})$,
where the prior covariance is chosen as 
\begin{align}
\label{eq:priorCov} \bSigma_{\pr} = \frac{\lambda^2}{p-1}\bSigma_\ell^2,
\end{align}
where $\bSigma_\ell$ is the leading $\ell \times \ell$ block of the empirical snapshot covariance $\bSigma_\bY$. 
The scalar $\lambda$ acts as a regularization parameter similar to Tikhonov regularization, to control balance between the  prior and the data misfit. Methods for selecting this parameter include the discrepancy principle or L-curve analysis; see, e.g., \cite[Ch. 4]{Hansen}. The choice of prior is motivated by the empirical variability of the training data snapshots in the reduced basis, so that the prior covariance reflects the dominant modes of the data through the singular values of the snapshot matrix as described in \cite{kakasenko2025bridginggapdeterministicprobabilistic}.
Given the linear-Gaussian structure of \eqref{eq:reduced_obs}, the posterior distribution of $\bfm$ conditioned on $\bfy$ is also Gaussian,
\begin{align*}
    \bfm \mid \bfy \sim \mathcal{N}(\bfm_{\text{post}},\, \bSigma_{\mathrm{post}}),
\end{align*}
with posterior covariance and  mean given by 
\begin{align*}
\bSigma_{\mathrm{post}}^{-1} &= 
( \bS\t\bPhi)\t  \bSigma_{\mathrm{noise}}^{-1} (\bS\t \bPhi) + \bSigma_{\mathrm{pr}}^{-1},
\\[0.2em]
\bfm_{\mathrm{post}} &=
\bSigma_{\mathrm{post}} ( \bS\t\bPhi)\t \bSigma_{\mathrm{noise}}^{-1} \bfy.
\end{align*}
The posterior mean $\bPhi \bfm_{\mathrm{post}}$ provides a linear estimator of the full state $\bfu$. In the linear-Gaussian setting considered here, the posterior mean coincides with the maximum a posteriori (MAP) estimate. The posterior covariance $\bPhi \bSigma_{\mathrm{post}} \bPhi^\top$ quantifies the uncertainty in this estimate.
Further details on this Bayesian state estimation framework can be found in \cite{kakasenko2025bridginggapdeterministicprobabilistic}.

\subsection{Sensor Placement and the D-optimality Criterion} \label{subsec:single-fidelity}
In practical settings, it is often infeasible to install sensors throughout the entire spatial domain due to constraints in budgetary cost, accessibility, or physical limitations.
Instead, one typically identifies a finite set of feasible or candidate sensor locations and then selects a subset that maximizes the information gained about the system state.
Let \(M \le N\) denote the number of candidate sensor locations, and let 
\(\bS_{\mathrm{cand}} \in \{0,1\}^{N \times M}\) be the corresponding selection matrix, cf.\ \Cref{subsec:state}. Define the matrix $\bPsi = \bS_{\mathrm{cand}}\t \bPhi \in \R^{M \times \ell}$ to be the restriction of the modal basis $\bPhi$ to the candidate sensor locations.
For notational convenience, we represent a sensor configuration by a subset of indices 
\(\S \subseteq \{1,2,\hdots,M\}\), where the indices in $\S$ correspond to column indices in $\bS_{\mathrm{cand}}$. As we will see, using a set is valid for the D-optimality criterion, since the order of selection is irrelevant and duplicate selections are not allowed.

In the Gaussian setting, the posterior covariance of \(\bfu\) fully characterizes uncertainty, 
and its determinant is proportional to the volume of the corresponding uncertainty ellipsoid.
This motivates the Bayesian D-optimality criterion~\cite{alexanderian2023briefnotebayesiandoptimality}, which can be expressed as the maximization of the log-determinant of the  matrix \(\bSigma_\mathrm{post}^{-1}(\S) \bSigma_{\mathrm{pr}}\), leading to
\begin{equation}\label{eq:single_obj_fn}
\Phi_D(\S) 
= \log\det\!\Big(
\bSigma_{\mathrm{pr}}^{1/2} \bSigma_\mathrm{post}^{-1}(\S) \bSigma_{\mathrm{pr}}^{1/2}
\Big), 
\quad \S \subseteq \{1,2,\hdots,M\}, \quad |\S| = k \le M.
\end{equation}
Here, we slightly abuse notation by identifying the index set \(\S\) with its associated selection matrix \(\bS\), which consists of the columns of the identity matrix $\bI_M$ corresponding to the indices in $\S$. The quantities entering the posterior take the form
\begin{align*}
\begin{aligned}
\bSigma_{\mathrm{post}}(\S) 
&= \left((\bS\t\bPsi)\t 
\bSigma_{\mathrm{noise}}^{-1} 
(\bS\t\bPsi) + \bSigma_{\mathrm{pr}}^{-1}\right)^{-1}, \\[0.2em]
\bfm_{\mathrm{post}}
&= \bSigma_{\mathrm{post}} 
(\bS\t\bPsi)\t
\bSigma_{\mathrm{noise}}^{-1} \bfy.
\end{aligned}
\end{align*}

Maximizing $ \Phi_D(\S)$ over all possible selections $\S$ is known to be NP-hard~\cite{eswar2025bayesiandoptimalexperimentaldesigns}.
This motivates the use of approximate methods such as greedy and relaxation-based approaches.  
A standard greedy strategy, originating in the theory of submodular set functions~\cite{nemhauser1978analysis,5717225},  
has been successfully applied in Bayesian experimental design and sensor placement problems~\cite{JMLR:v9:krause08a,fluid1,kakasenko2025bridginggapdeterministicprobabilistic}.  
In the single-fidelity setting, the greedy algorithm proceeds as follows:  
starting with $\S = \emptyset$, it iteratively adds the sensor location that yields the largest improvement in $\Phi_D(\S)$,
\begin{align*}
i^* \in  
\argmax_{i \in \S_{\mathrm{cand}} \setminus \S}
\left\{
\Phi_D(\S \cup \{i\}) - \Phi_D(\S)
\right\},
\end{align*}
then updates $\S \gets \S \cup \{i^*\}$ until $|\S| = k$.  
The resulting set $\S$ represents an approximately D-optimal sensor configuration. In particular, since $\Phi_D$ is a monotone, submodular, and normalized set function, the objective value of the sensor configuration produced by this algorithm is guaranteed to be within a constant factor $1-e^{-1}$ of the optimal value \cite{nemhauser1978analysis}.
In \Cref{sec:mfgreedy}, we extend this approach to the multifidelity setting.

\subsection{Linear Algebra Background}
We briefly review pertinent matrix identities used in our analysis.

\begin{lemma}[Sherman-Morrison Formula {\cite[Sec.~2.1.4]{golub2013matrix}}]
\label{eq:sherman}
    Let $\bA\in \R ^{ n \times n}$ be an invertible matrix and let $\bfu, \bfv \in \R^n$. 
    The matrix $\bA + \bfu\bfv\t$ is invertible if and only if $1 + \bfv\t\bA^{-1}\bfu \neq 0$.
    In this case, the inverse is given by the formula 
    \begin{align*}
        \left(\bA + \bfu \bfv\t\right)^{-1} = \bA^{-1} - \frac{\bA^{-1}\bfu \bfv\t\bA^{-1}}{1 + \bfv\t\bA^{-1}\bfu}.
    \end{align*}
\end{lemma}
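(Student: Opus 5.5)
We prove the two directions of the invertibility claim separately, and the explicit inverse formula falls out of the sufficiency direction. Throughout, set $\alpha = 1 + \bfv\t\bA^{-1}\bfu$ and let $\bB = \bA + \bfu\bfv\t$.

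\emph{Sufficiency and the formula.} Assume $\alpha \neq 0$ and define the candidate
\begin{align*}
\bB_0 = \bA^{-1} - \alpha^{-1}\bA^{-1}\bfu\bfv\t\bA^{-1}.
\end{align*}
Multiply out $\bB\bB_0$:
\begin{align*}
\bB\bB_0 = \bI + \bfu\bfv\t\bA^{-1} - \alpha^{-1}\bfu\bfv\t\bA^{-1} - \alpha^{-1}\bfu(\bfv\t\bA^{-1}\bfu)\bfv\t\bA^{-1}.
\end{align*}
The key observation is that $\bfv\t\bA^{-1}\bfu$ is a scalar equal to $\alpha-1$. The last three terms therefore collect to
\begin{align*}
\bfu\bfv\t\bA^{-1}\bigl(1 - \alpha^{-1}(1 + \alpha - 1)\bigr) = \mathbf{0}.
\end{align*}
Hence $\bB\bB_0 = \bI$. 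For square matrices a one-sided inverse is a two-sided inverse, so $\bB$ is invertible with $\bB^{-1} = \bB_0$, which is the stated formula. If one prefers, $\bB_0\bB = \bI$ can be checked by the symmetric computation.

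\emph{Necessity.} Suppose instead $\alpha = 0$; we show $\bB$ is singular. Note $\bfu \neq \b{0}$, since otherwise $\alpha = 1$. Take $\bfa = \bA^{-1}\bfu$, which is nonzero because $\bA^{-1}$ is invertible. Then
\begin{align*}
\bB\bfa = \bfu + \bfu(\bfv\t\bA^{-1}\bfu) = \alpha\bfu = \b{0},
\end{align*}
so $\bB$ has a nontrivial kernel and is not invertible. This is the contrapositive of necessity.

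\emph{Alternative and main obstacle.} As a cross-check, the determinant lemma $\det(\bA + \bfu\bfv\t) = \det(\bA)\,\alpha$ gives both directions at once. Proving it, however, requires a block-matrix factorization, which is longer than the route above. No step is genuinely hard. The only point needing care is recognizing $\bfv\t\bA^{-1}\bfu$ as a scalar so that it can be moved past $\bfu$ and the terms combined; this is the crux of the cancellation.
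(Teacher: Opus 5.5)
Your proof is correct and complete. The paper gives no proof of its own here (it only cites Golub--Van Loan), and your argument is the standard one: a direct multiplication confirms the proposed inverse when $1 + \bfv\t\bA^{-1}\bfu \neq 0$, and the explicit null vector $\bA^{-1}\bfu$ supplies the ``only if'' half of the equivalence. In the paper's own context, that equivalence also follows in one line from the matrix determinant lemma stated immediately afterwards, since $\det\bA \neq 0$, so your cross-check is available there at no extra cost.
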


\begin{lemma}[Matrix Determinant Lemma {\cite[Example 1.3.24.]{HornJohnson2012}}]
\label{lemma:matrix-det-lemma}
    Let $\bA\in \R ^{ n \times n}$ be an invertible matrix and let $\bfu, \bfv \in \R^n$. Then 
     \begin{align}\label{eq:Matrix_Determinant}
    \det\left(\bA + \bfu \bfv\t\right) = \left(1 + \bfv\t\bA^{-1} \bfu\right)\det\bA.
    \end{align}  
\end{lemma}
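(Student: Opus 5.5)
We prove the Matrix Determinant Lemma, \eqref{eq:Matrix_Determinant}, by factoring out $\bA$ and reducing to the identity plus a rank-one matrix. Since $\bA$ is invertible, we write
\begin{align*}
\bA + \bfu\bfv\t = \bA\left(\bI_n + \bA^{-1}\bfu\bfv\t\right).
\end{align*}
Multiplicativity of the determinant then gives $\det(\bA + \bfu\bfv\t) = \det\bA \cdot \det(\bI_n + \bfw\bfv\t)$ with $\bfw = \bA^{-1}\bfu$. It therefore suffices to show $\det(\bI_n + \bfw\bfv\t) = 1 + \bfv\t\bfw$ for arbitrary $\bfw,\bfv\in\R^n$.

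For this core identity we use a block-matrix argument. Consider the $(n+1)\times(n+1)$ matrix
\begin{align*}
\mathbf{M} = \begin{bmatrix} \bI_n & -\bfw \\ \bfv\t & 1 \end{bmatrix}.
\end{align*}
We compute its determinant in two ways, using block triangular factorizations (block Gaussian elimination) on either pivot block. Eliminating with the top-left $\bI_n$ gives
\begin{align*}
\mathbf{M} = \begin{bmatrix} \bI_n & \b{0} \\ \bfv\t & 1 \end{bmatrix}\begin{bmatrix} \bI_n & -\bfw \\ \b{0} & 1 + \bfv\t\bfw \end{bmatrix},
\end{align*}
so $\det\mathbf{M} = 1 + \bfv\t\bfw$. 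Eliminating with the bottom-right scalar $1$ gives
\begin{align*}
\mathbf{M} = \begin{bmatrix} \bI_n & -\bfw \\ \b{0} & 1 \end{bmatrix}\begin{bmatrix} \bI_n + \bfw\bfv\t & \b{0} \\ \bfv\t & 1 \end{bmatrix},
\end{align*}
so $\det\mathbf{M} = \det(\bI_n + \bfw\bfv\t)$. Equating the two expressions yields the core identity. Substituting back $\bfw=\bA^{-1}\bfu$ gives \eqref{eq:Matrix_Determinant}.

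The only step needing care is checking these two factorizations by direct block multiplication, together with the fact that a block triangular matrix has determinant equal to the product of the determinants of its diagonal blocks; both are routine. An alternative route for the core identity is spectral: $\bfw\bfv\t$ has eigenvalue $\bfv\t\bfw$ with eigenvector $\bfw$, and eigenvalue $0$ with multiplicity at least $n-1$ on the null space of $\bfv\t$. Hence $\bI_n + \bfw\bfv\t$ has eigenvalues $1+\bfv\t\bfw$ and $1$ with multiplicity $n-1$. This argument, however, requires separate handling of the degenerate cases $\bfw = \b{0}$ or $\bfv\t\bfw=0$ (where $\bfw\bfv\t$ may be nilpotent), e.g.\ via the characteristic polynomial. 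That is why we prefer the block-determinant argument, which treats all cases uniformly.
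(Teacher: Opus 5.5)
Your proof is correct: both block factorizations of $\mathbf{M}$ check out by direct multiplication, and together with $\bA+\bfu\bfv\t=\bA(\bI+\bA^{-1}\bfu\bfv\t)$ (with $\mathbf{w}=\bA^{-1}\bfu$) they give the lemma for arbitrary $\bfu,\bfv$.

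The paper does not prove this statement; it only cites Horn and Johnson. The cited example sits in their chapter on eigenvalues and similarity, so the route taken there is presumably your spectral alternative: the eigenvalues of the rank-one matrix $\mathbf{w}\bfv\t$ are $\bfv\t\mathbf{w}$ together with $n-1$ zeros.

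Your block-elimination argument is self-contained. It uses only multiplicativity of the determinant and the block-triangular rule, needs no eigenvalue theory, and works verbatim over any field. The spectral route is shorter once that theory is available.

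Contrary to your closing remark, the spectral route needs no separate treatment of degenerate cases. The null space of $\mathbf{w}\bfv\t$ contains that of $\bfv\t$, so $0$ has geometric, hence algebraic, multiplicity at least $n-1$. The trace identity $\mathrm{tr}(\mathbf{w}\bfv\t)=\bfv\t\mathbf{w}$ then fixes the remaining eigenvalue as $\bfv\t\mathbf{w}$, whether or not $\mathbf{w}\bfv\t$ is nilpotent. So $\det(\bI+\mathbf{w}\bfv\t)=1+\bfv\t\mathbf{w}$ holds uniformly.
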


\begin{lemma}[Minkowski Determinant Theorem {\cite[Sec.~4.1.8]{Marcus1965ASO}}]
\label{lemma:minkowski}
    Let $\bA,\bB \in \R^{n \times n}$ be symmetric positive semidefinite (SPSD) matrices. Then
    \[\left(\det(\bA+\bB)\right)^{\frac1n} \geq (\det\bA)^{\frac1n} + (\det\bB)^{\frac1n}.\]
\end{lemma}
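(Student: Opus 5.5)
The plan is to prove the Minkowski determinant inequality by reducing to the case $\bA = \bI$. The reduction uses simultaneous congruence, and the reduced inequality then follows from the AM--GM inequality applied to eigenvalues.

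\emph{Degenerate case.} If $\det\bA = 0$ (and symmetrically if $\det\bB=0$), the right-hand side equals $(\det\bB)^{1/n}$. Since $\bA+\bB \succeq \bB \succeq 0$, the eigenvalue monotonicity that follows from Courant--Fischer gives $\det(\bA+\bB)\ge\det\bB$. This settles the case, so from now on I assume $\bA$ is symmetric positive definite.

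\emph{Reduction.} With $\bA$ positive definite, write
\[
\bA+\bB=\bA^{1/2}\left(\bI+\bA^{-1/2}\bB\bA^{-1/2}\right)\bA^{1/2}.
\]
Set $\mathbf{C}=\bA^{-1/2}\bB\bA^{-1/2}$, which is SPSD with eigenvalues $\mu_1,\dots,\mu_n\ge 0$. Multiplicativity of the determinant gives $\det(\bA+\bB)=\det\bA\,\det(\bI+\mathbf{C})$ and $\det\bB=\det\bA\,\det\mathbf{C}$. Dividing the claimed inequality by $(\det\bA)^{1/n}>0$ reduces it to
\[
\prod_{i=1}^n(1+\mu_i)^{1/n}\;\ge\;1+\prod_{i=1}^n\mu_i^{1/n}.
\]

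\emph{Key step.} The main step is this scalar inequality. I would divide both sides by the left-hand side, which turns the claim into
\[
\prod_i\left(\tfrac{1}{1+\mu_i}\right)^{1/n}+\prod_i\left(\tfrac{\mu_i}{1+\mu_i}\right)^{1/n}\le 1.
\]
Applying AM--GM to each product, the left side is at most
\[
\frac1n\sum_i\frac{1}{1+\mu_i}+\frac1n\sum_i\frac{\mu_i}{1+\mu_i}=\frac1n\sum_i 1=1,
\]
which is exactly the required bound.

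The only mild obstacle is bookkeeping: justifying the symmetric square root $\bA^{1/2}$ via the spectral theorem, and handling the singular case separately as above. Both are routine.
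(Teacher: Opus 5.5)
The paper gives no proof of this lemma: it is quoted from Marcus--Minc and used only as a black box to obtain \Cref{cor:minkowski}, so there is no in-paper argument to compare against. Your proposal is correct and complete, and it is the standard self-contained proof. Whitening by $\bA^{-1/2}$ reduces the claim to $\prod_i(1+\mu_i)^{1/n}\ge 1+\prod_i\mu_i^{1/n}$. AM--GM, applied separately to the numbers $1/(1+\mu_i)$ and $\mu_i/(1+\mu_i)$ (which sum to $1$ termwise), then finishes it.

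Two small remarks.

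First, the Courant--Fischer detour is optional. The inequality is symmetric in $\bA$ and $\bB$, and your reduction tolerates $\mu_i=0$. So you may whiten by whichever of the two matrices is nonsingular, and if both are singular the right-hand side is $0$ while the left-hand side is nonnegative.

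Second, what your proof buys over the citation is self-containment, but the paper never uses the lemma's full strength. \Cref{cor:minkowski} is invoked in \Cref{prop:monotone} and \Cref{lower_variance_sensor} only to conclude $\det(\mathbf{X}+\mathbf{Y})\ge\det\mathbf{X}$ for SPSD $\mathbf{X},\mathbf{Y}$. That is precisely the monotonicity fact you use in your degenerate case. For the rank-one updates occurring there, it follows even more directly from \Cref{lemma:matrix-det-lemma}, since $\det(\bB+\bfa\bfa\t)=(1+\bfa\t\bB^{-1}\bfa)\det\bB\ge\det\bB$ when $\bB$ is SPD.
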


\begin{corollary}
\label{cor:minkowski}
Let $\bA,\bB \in \R^{n \times n}$ be SPSD matrices. Then
\[\det(\bA+\bB) \geq \det\bA + \det\bB.\]
\end{corollary}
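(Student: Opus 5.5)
The plan is to derive the corollary directly from the Minkowski Determinant Theorem (\Cref{lemma:minkowski}) by raising both sides to the $n$-th power. Set $a = (\det\bA)^{1/n}$ and $b = (\det\bB)^{1/n}$. Both are well defined and nonnegative, because the determinant of an SPSD matrix is the product of its eigenvalues, and these are all nonnegative.

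\Cref{lemma:minkowski} gives $(\det(\bA+\bB))^{1/n} \ge a + b \ge 0$. The sum $\bA+\bB$ is again SPSD, so $\det(\bA+\bB)\ge 0$. The map $t\mapsto t^n$ is nondecreasing on $[0,\infty)$, so raising both sides to the $n$-th power preserves the inequality:
\[
\det(\bA+\bB) \ge (a+b)^n .
\]

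It remains to compare $(a+b)^n$ with $a^n+b^n$. By the binomial theorem,
\[
(a+b)^n = a^n + b^n + \sum_{j=1}^{n-1}\binom{n}{j}a^j b^{n-j}.
\]
Every term in the middle sum is nonnegative because $a,b\ge 0$. Hence $(a+b)^n \ge a^n + b^n = \det\bA + \det\bB$, which is the claim.

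There is no real obstacle here. The only point that needs care is nonnegativity: both the $n$-th roots and the monotonicity of $t\mapsto t^n$ require the quantities to be nonnegative, and this holds precisely because $\bA$, $\bB$ and $\bA+\bB$ are SPSD. The case $n=1$ is trivial, since there the inequality is an equality.
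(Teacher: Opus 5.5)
Your proposal is correct and follows essentially the same route as the paper: apply the Minkowski determinant theorem, raise both sides to the $n$-th power using monotonicity of $t \mapsto t^n$ on $[0,\infty)$, and discard the nonnegative cross terms of the binomial expansion. Your explicit check that $\det(\bA+\bB)\ge 0$, which justifies the step $\bigl((\det(\bA+\bB))^{1/n}\bigr)^n=\det(\bA+\bB)$, is a small clarification the paper leaves implicit.
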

\begin{proof}
By \Cref{lemma:minkowski},
\[\left(\det(\bA+\bB)\right)^{\frac1n} \geq (\det\bA)^{\frac1n} + (\det\bB)^{\frac1n}.\]
Since the determinant of an SPSD  matrix is nonnegative and the map $x \mapsto x^n$ is increasing on $\R_{\geq0}$, from binomial expansion we obtain 
\begin{align*}
\det(\bA+\bB) &\geq \left((\det\bA)^{\frac1n} + (\det\bB)^{\frac1n}\right)^n \\&= \det\bA + \det\bB + \sum_{k=1}^{n-1} \binom{n}{k} (\det\bA)^{\frac kn} (\det\bB)^{\frac{n-k}n} \\&\geq \det \bA + \det \bB.
\end{align*}
\end{proof}

\section{Multifidelity Sensor Placement}

\label{sec:mfgreedy}
In this section, we introduce the multifidelity sensor placement problem, in which sensors of varying quality and cost must be selected under a global budget constraint. We formalize the design criterion and establish key properties of the resulting D-optimality objective. These properties motivate the use of greedy methods, which we then develop and analyze, including an efficient implementation based on Sherman-Morrison updates and a discussion of their theoretical performance. 
\subsection{Problem Setup and Design Criterion}
We now extend our formulation to the multifidelity sensor placement problem. For clarity of exposition, we consider two types of sensors, cheap and expensive. Cheap sensors are less accurate but less costly, while expensive sensors provide higher-quality measurements at a higher cost.

As before, we consider $M \le N$ different candidate sensor locations. At each candidate location, one may place either a cheap sensor, an expensive sensor, or no sensor at all, with the restriction that no two sensors of any type occupy the same location. Each sensor type is characterized by its associated cost and the standard deviation in the noise of its measurements.
Let $\sigmaC$ and $\sigmaE$
be the measurement noise standard deviations for cheap and expensive sensors, respectively, 
and let $\cC$ and $\cE$ be
the corresponding costs. These costs may represent financial, installation, or operational constraints.
Throughout this article we assume
\[
    \sigmaC > \sigmaE, 
    \qquad 
    \cC < \cE,
\]
reflecting the trade-off between accuracy and cost.

In the single-fidelity case, since there is only one type of sensor, specifying a budget is tantamount to specifying the number of sensors. However, in the multifidelity case, we assume that there is a global budget $\bud$, and deciding the apportionment of the cheap and expensive sensors is a part of the problem.
Let $\Sc$, $\Se  \subseteq \{1, 2, \dots , M\}$ be index sets corresponding to the locations of cheap and expensive sensors, respectively, and let $\kc = |\Sc|$ and $\ke = |\Se|$. By the assumptions made thus far, 
\begin{align*}
    \Sc\cap\Se = \emptyset.
\end{align*}
Let $\bSc \in \{0,1\}^{M \times \kc}$ and $\bSe \in \{0,1\}^{M \times \ke}$
denote the selection matrices corresponding to cheap and expensive sensors $\Sc$ and $\Se $ respectively, and let $\bS = \begin{bmatrix}
    \bSc & \bSe
\end{bmatrix}$. The measurement model is 
\[ \bfy = \bS\t\bPsi \bfm + \bfeps,  \qquad \bfeps \sim \mathcal{N}(\b{0},\bSigma_{\rm noise}), \]
where the covariance matrix of the measurement noise is given by
\begin{equation*}
\bSigma_{\text{noise}} = \begin{bmatrix}
    \sigmaC^2 \bI_{\kc} & \\
    & \sigmaE^2 \bI_{\ke}
\end{bmatrix}.
\end{equation*}
With the prior distribution $\bfm \sim \mathcal{N}(\b{0},\bSigma_\pr)$, the posterior distribution takes the form $\bfm | \bfy \sim \mathcal{N}(\bfm_{\rm post}, \bSigma_{\rm post})$, where
\begin{align*}
\begin{aligned}
\bSigma_{\rm post} &= \left(\left(\bPsi\t\bS\right) \bSigma_{\text{noise}}^{-1} \left(\bS\t\bPsi\right) + \bSigma_{\pr}^{-1}\right)^{-1}, \\[0.2em]
\bfm_{\rm post} &= \bSigma_{\rm post}(\bPsi\t\bS)\bSigma_{\rm noise}^{-1}\bfy.
\end{aligned}
\end{align*}
 In what follows, it will be convenient to make explicit the dependence of the posterior covariance matrix on the selection list $S = [\Sc,\Se]$. Thus, we define
\begin{equation*}
\bSigma_\mathrm{post}^{-1}(S) = \left(\bPsi\t\bS\right) \bSigma_{\text{noise}}^{-1} \left(\bS\t\bPsi\right) + \bSigma_{\pr}^{-1},
\end{equation*}
where $\bS = \begin{bmatrix}
    \bSc & \bSe
\end{bmatrix}$ is the selection matrix for both cheap and expensive sensors corresponding to $S$.

\myparagraph{Problem statement} We are now ready to give the formal problem statement. As in the single-fidelity case, we aim to maximize the D-optimality criterion
\begin{equation}\label{multi_obj_fn}
 \Phi_D(S) := \log\det\left(\bSigma_{\mathrm{pr}}^{\frac12} \bSigma_\mathrm{post}^{-1}(S) \bSigma_{\mathrm{pr}}^{\frac12}\right)
\end{equation}
over all lists of non-overlapping selection sets $S = [\Sc,\Se]$, subject to the constraint that $\cC |\Sc| + \cE |\Se| \leq \bud$. The difference compared to the single-fidelity case is the budgetary  constraint and the dependence of the posterior covariance on the choice of sensors. 

This cost-constrained sensor placement problem can be viewed as a variant of the knapsack problem \cite{Korte2008}, where each candidate sensor has an associated cost and contributes to the Bayesian D-optimality objective $\Phi_{D}$. Exact solution of such combinatorial problems becomes intractable for large numbers of candidates $M$, motivating the use of greedy and heuristic strategies to obtain approximate solutions. Following \cite{eswar2025bayesiandoptimalexperimentaldesigns}, we recast the problem as a column subset selection problem. With this formulation, the D-optimality criterion can be equivalently expressed as follows.

\begin{proposition}\label{prop:I+AAT}
For each fidelity type $j \in \{\ch,\exp\}$, define
\[
\bA_j := \sigma_j^{-1}\,
\bSigma_{\mathrm{pr}}^{1/2}\,
\bPsi^\top.
\]
For a selection $S = [\Sc,\Se]$, we can write $\Phi_D(S) = \log\det\bigl(\bB(S)\bigr)$, where
\[
\bB(S)
:= \bI +
(\bAc \bSc)(\bAc \bSc)^\top
+
(\bAe \bSe)(\bAe \bSe)^\top.
\]

\end{proposition}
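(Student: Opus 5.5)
The plan is a direct computation: expand the data-misfit term of $\bSigma_\mathrm{post}^{-1}(S)$ blockwise, then conjugate by $\bSigma_\pr^{1/2}$. Since $\bS = \begin{bmatrix}\bSc & \bSe\end{bmatrix}$ and $\bSigma_{\rm noise}$ is block diagonal with blocks $\sigmaC^2\bI_{\kc}$ and $\sigmaE^2\bI_{\ke}$, the block product gives
\begin{align*}
(\bPsi\t\bS)\bSigma_{\rm noise}^{-1}(\bS\t\bPsi) = \sigmaC^{-2}\,\bPsi\t\bSc\bSc\t\bPsi + \sigmaE^{-2}\,\bPsi\t\bSe\bSe\t\bPsi .
\end{align*}
Adding $\bSigma_\pr^{-1}$ then recovers $\bSigma_\mathrm{post}^{-1}(S)$.

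Next I would multiply on both sides by $\bSigma_\pr^{1/2}$, the symmetric positive definite square root of the prior covariance. The prior term becomes $\bSigma_\pr^{1/2}\bSigma_\pr^{-1}\bSigma_\pr^{1/2} = \bI$. Each misfit term factors as a Gram product, since
\begin{align*}
\sigma_j^{-2}\,\bSigma_\pr^{1/2}\bPsi\t\bS_j\bS_j\t\bPsi\bSigma_\pr^{1/2} = (\sigma_j^{-1}\bSigma_\pr^{1/2}\bPsi\t\bS_j)(\sigma_j^{-1}\bSigma_\pr^{1/2}\bPsi\t\bS_j)\t = (\bA_j\bS_j)(\bA_j\bS_j)\t ,
\end{align*}
for $j\in\{\ch,\exp\}$. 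The transpose uses the symmetry of $\bSigma_\pr^{1/2}$. Summing the three pieces gives exactly $\bB(S)$, and taking $\log\det$ in the definition \eqref{multi_obj_fn} yields $\Phi_D(S) = \log\det\bigl(\bB(S)\bigr)$.

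There is no real obstacle here; the only points needing care are two small ones. First, $\bSigma_\pr$ is symmetric positive definite, because it is a positive multiple of the diagonal matrix $\bSigma_\ell^2$ with positive singular values. Hence $\bSigma_\pr^{1/2}$ is well defined, symmetric, and invertible. Second, the block-diagonal noise covariance is what splits the cross terms cleanly between the two fidelities; the disjointness $\Sc\cap\Se=\emptyset$ is not even needed for this identity. I would also note that $\bB(S)$ is positive definite, being $\bI$ plus two SPSD terms, so the logarithm of its determinant is well defined.
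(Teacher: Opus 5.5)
Your proof is correct and is essentially the paper's argument. The paper conjugates by $\bSigma_{\mathrm{pr}}^{1/2}$ first and then expands the block product with the concatenated factor $\begin{bmatrix}\bAc\bSc & \bAe\bSe\end{bmatrix}$; you split the noise blocks first and then conjugate, which is only a reordering of the same computation. Your side remarks (positive definiteness of $\bSigma_{\mathrm{pr}}$, and that disjointness is not needed for the identity) are accurate details the paper leaves implicit.
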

\begin{proof} The proof follows from 
\begin{align*}
\bSigma^{\frac 12}_{\mathrm{pr}}\bSigma_\mathrm{post}^{-1}(S) \bSigma^{\frac 12}_{\mathrm{pr}} = \bSigma^{\frac 12}_{\mathrm{pr}}\left(\bPsi\t\bS \bSigma_{\text{noise}}^{-1} \bS\t\bPsi + \bSigma_{\pr}^{-1}\right)\bSigma^{\frac 12}_{\mathrm{pr}}
= \bI + \bSigma_{\mathrm{pr}}^{\frac12} \bPsi\t\bS \bSigma_{\text{noise}}^{-1} \bS\t\bPsi\bSigma_{\mathrm{pr}}^{\frac12}.
\end{align*}
Expanding the matrix on the right,
\begin{align*}\bSigma_{\mathrm{pr}}^{\frac12} \bPsi\t\bS \bSigma_{\text{noise}}^{-1} \bS\t\bPsi\bSigma_{\mathrm{pr}}^{\frac12} &= \bSigma_{\mathrm{pr}}^{\frac12} \bPsi\t\begin{bmatrix} \bSc & \bSe \end{bmatrix} \begin{bmatrix} \sigmaC^{-2} \bI & \\ & \sigmaE^{-2}\bI \end{bmatrix} \begin{bmatrix} \bSc\t\\ \bSe\t\end{bmatrix} \bPsi\bSigma_{\mathrm{pr}}^{\frac12}
\\&= \begin{bmatrix} \bAc \bSc & \bAe \bSe \end{bmatrix} \begin{bmatrix}
    \bSc\t\bAc\t\\ \bSe\t\bAe\t
\end{bmatrix}
\\&= \left(\bAc \bSc\right)\left(\bAc \bSc\right)\t+ \left(\bAe \bSe\right)\left(\bAe \bSe\right)\t.
\end{align*}
Plugging this expansion into the definition of $\Phi_D$ yields 
the desired result.
\end{proof}
As noted above, this result allows us to view optimal sensor placement through the lens of column subset selection, as we aim to choose those columns $\{\bfa_i\}_{i=1}^k$ from $\bAc$ and $\bAe$ that maximize $\log\det\left(\bI + \sum_{i=1}^k \bfa_i \bfa_i\t\right)$ subject to a budget constraint.
Moreover, expressing the objective function in this way enables computationally efficient implementations of approaches that add one sensor at a time, such as greedy approaches, via rank-one updates. Namely, when adding a sensor of fidelity $j\in\{\ch,\exp\}$ at location $i \in \{1,\dots,M\}$, one can add the rank-one matrix $\left([\bA_j]_{:i}\right)\left([\bA_j]_{:i}\right)\t$ to the matrix inside the log-determinant in \Cref{prop:I+AAT}, and the resulting expression is equal to $\Phi_D\left(S^{(j,i)}\right)$, where the notation $S^{(j,i)}$ denotes
\begin{equation*}
S^{(j,i)} =
\begin{cases}
\big[\Sc \cup \{i\}, \,\Se \big], & \text{if } j = \ch, \\[4pt]
\big[\Sc, \,\Se \cup \{i\}\big], & \text{if } j = \exp.
\end{cases}
\end{equation*}
\subsection{Properties of the objective function}
\label{subsec:properties}
The single-fidelity objective in \eqref{eq:single_obj_fn} is known to be monotone and submodular with respect to the set of selected sensors \cite{Robertazzi}. That is, adding a sensor never decreases the objective (monotonicity), and the marginal gain from adding a sensor decreases as the selected set grows (submodularity). These properties provide theoretical justification for greedy selection, which is both empirically and analytically effective for maximizing monotone, submodular set functions such as \eqref{eq:single_obj_fn}  (see, e.g.,~\cite{kandasamy,Khuller,Minoux,nemhauser1978analysis,SVIRIDENKO200441,5717225}). 
Thus, we first extend some of these properties to multifidelity sensor placement, where the objective function is $\Phi_D$ in \eqref{multi_obj_fn}, which partially justifies the use of greedy algorithms in the multifidelity case.
\begin{proposition}[Monotonicity and Submodularity]
\label{prop:monotone}
\label{add_sensor}
Let $S = [\Sc,\Se]$ and ${\widetilde  S} = [\tSc, \tSe  ]$  be two selection lists such that
\[
\Sc\cap \Se = \tSc \cap \tSe = \emptyset,
\qquad
S_j \subseteq \widetilde{S}_j
\quad \text{for } j \in \{\ch,\exp\}.
\]
Then the D--optimality criterion $\Phi_D$ satisfies the following properties:

\begin{enumerate}
    \item \textbf{Monotonicity.} $\Phi_D(S) \le \Phi_D(\widetilde{S})$.
    \item \textbf{Submodularity.} Let $i \notin \tSc \cup \tSe$ and let $j \in \{\ch,\exp\}$. Then
\[
\Phi_D\!\left(S^{(j,i)}\right) - \Phi_D(S)
\;\ge\;
\Phi_D\!\left(\widetilde{S}^{(j,i)}\right) - \Phi_D(\widetilde{S}).
\]
\end{enumerate}
\end{proposition}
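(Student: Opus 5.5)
Both parts will be proved from the representation in \Cref{prop:I+AAT}, $\Phi_D(S)=\log\det\bB(S)$. The key observation is that under the hypotheses $S_j\subseteq\widetilde S_j$, the matrix $\bB(\widetilde S)$ is obtained from $\bB(S)$ by adding a finite sum of rank-one SPSD terms $\bfa\bfa\t$. Here $\bfa$ ranges over the columns $[\bAc]_{:i}$ for $i\in\tSc\setminus\Sc$ and $[\bAe]_{:i}$ for $i\in\tSe\setminus\Se$. Write
\[
\bB(\widetilde S)=\bB(S)+\bD,\qquad \bD:=\sum_{i\in \tSc\setminus \Sc}[\bAc]_{:i}[\bAc]_{:i}\t+\sum_{i\in \tSe\setminus \Se}[\bAe]_{:i}[\bAe]_{:i}\t \succeq 0 .
\]
Also $\bB(S)\succeq \bI$ is symmetric positive definite, so all log-determinants are well defined.

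For monotonicity, I will show $\det(\bB(S)+\bD)\ge\det\bB(S)$. The quickest route is \Cref{cor:minkowski} with $\bA=\bB(S)$ and $\bB=\bD$, which gives $\det(\bB(S)+\bD)\ge\det\bB(S)+\det\bD\ge\det\bB(S)$ since $\det\bD\ge0$. Taking logarithms, which is monotone, yields $\Phi_D(S)\le\Phi_D(\widetilde S)$. Alternatively, one could add the rank-one terms one at a time via \Cref{lemma:matrix-det-lemma}, each factor being $1+\bfa\t\bB^{-1}\bfa\ge1$.

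For submodularity, fix $i\notin\tSc\cup\tSe$ and $j$, and let $\bfa=[\bA_j]_{:i}$. Because $i$ lies in neither $\tSc\cup\tSe$ nor (a fortiori) $\Sc\cup\Se$, both $S^{(j,i)}$ and $\widetilde S^{(j,i)}$ are valid disjoint selections. Their matrices are $\bB(S)+\bfa\bfa\t$ and $\bB(\widetilde S)+\bfa\bfa\t$. By \Cref{lemma:matrix-det-lemma} the two marginal gains are
\[
\Phi_D(S^{(j,i)})-\Phi_D(S)=\log\bigl(1+\bfa\t\bB(S)^{-1}\bfa\bigr),\qquad \Phi_D(\widetilde S^{(j,i)})-\Phi_D(\widetilde S)=\log\bigl(1+\bfa\t\bB(\widetilde S)^{-1}\bfa\bigr).
\]
It therefore suffices to show $\bfa\t\bB(\widetilde S)^{-1}\bfa\le\bfa\t\bB(S)^{-1}\bfa$, i.e.\ $\bB(S)^{-1}\succeq(\bB(S)+\bD)^{-1}$ in the Loewner order.

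This operator anti-monotonicity of inversion is the only nontrivial step. I will prove it without invoking external results by repeatedly applying \Cref{eq:sherman}, adding the rank-one pieces of $\bD$ one at a time. For SPD $\bA$ and any $\bfv$,
\[
(\bA+\bfv\bfv\t)^{-1}=\bA^{-1}-\frac{\bA^{-1}\bfv\bfv\t\bA^{-1}}{1+\bfv\t\bA^{-1}\bfv}\preceq\bA^{-1}.
\]
Here the denominator is at least $1$ and the subtracted term is SPSD. Moreover, $\bA+\bfv\bfv\t$ stays SPD, so the induction continues. Chaining these inequalities gives $(\bB(S)+\bD)^{-1}\preceq\bB(S)^{-1}$. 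Since $\log(1+x)$ is increasing, the submodularity inequality follows.
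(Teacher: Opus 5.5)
Your proof is correct and follows essentially the same route as the paper. Both use \Cref{prop:I+AAT} with \Cref{cor:minkowski} for monotonicity, and \Cref{lemma:matrix-det-lemma} plus the Sherman--Morrison formula to get $\bfa\t\bB(\widetilde S)^{-1}\bfa\le\bfa\t\bB(S)^{-1}\bfa$ for submodularity. The only difference is bookkeeping: the paper reduces to $\widetilde S$ adding a single sensor and says to apply this repeatedly, while you absorb the whole difference $\bD$ and make the induction an explicit Loewner-order chain.
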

\begin{proof}
If $S_j = {\widetilde  S}_j$ for all $j \in \{\ch,\exp\}$, both results are immediate. Otherwise, assume without loss of generality that there exists $j \in \{\ch, \exp\}$ and $i \in \widetilde S_{j}$ such that ${\widetilde S} = S^{(j,i)}$. To obtain the general case, one can apply the result for the assumed case repeatedly while adding one sensor at a time to $S$.

Let $\bSc$ and $\bSe$ denote the cheap and expensive selection matrices for $\Sc$ and $\Se $, respectively. 
Let $\bfa = [\bA_{j}]_{:i}$ be the $i$-th column of $\bA_{j}$, and let $\bfh = [\bA_{j'}]_{:i'}$ be the $i'$-th column of $\bA_{j'}$. 
Let $\bB(S)$ be as in \Cref{prop:I+AAT}, and for brevity write $\bB$ instead of $\bB(S)$. Note that $\bB$ is symmetric positive definite (SPD). We tackle each property:

\textbf{1. Monotonicity}: Since $\bB$ and $\bfa\bfa\t$ are both SPSD, \Cref{cor:minkowski} applies, which gives
\begin{align*}
 \det\left(\bB + \bfa\bfa\t\right) \geq \det\left(\bB\right) + \det\left(\bfa\bfa\t\right) \geq \det\left(\bB\right) 
\end{align*}
as the determinant of a positive semidefinite matrix is nonnegative. Taking logarithms on both sides gives the result $\Phi_D({\widetilde  S}) \geq \Phi_D(S)$.

\textbf{2. Submodularity}: By \Cref{prop:I+AAT} and \Cref{lemma:matrix-det-lemma},
\begin{equation}\label{eqn:inter}
\begin{aligned}
\Phi_D(S^{(j',i')}) - \Phi_D(S) &= \log\det\left(\bB + \bfh\bfh\t\right) - \log\det(\bB)  = \log(1 + \bfh\t \bB^{-1} \bfh), \\ 
\Phi_D({\widetilde S}^{(j',i')}) - \Phi_D({\widetilde S}) &= \log\det\left(\bB + \bfa\bfa\t + \bfh\bfh\t\right) - \log\det\left(\bB + \bfa\bfa\t\right) 
\\&= \log(1 + \bfh\t (\bB+\bfa\bfa\t)^{-1} \bfh).
\end{aligned}
\end{equation}
Applying the Sherman-Morrison formula \eqref{eq:sherman},
\begin{align*}
\bfh\t (\bB+\bfa\bfa\t)^{-1} \bfh = & \>   \bfh\t \left(\bB^{-1} - \frac{\bB^{-1} \bfa \bfa\t \bB^{-1}}{1+\bfa\t\bB^{-1}\bfa}\right) \bfh \\
= & \> \bfh\t\bB^{-1}\bfh - \frac{(\bfh\t \bB^{-1} \bfa)^2}{1+\bfa\t\bB^{-1}\bfa} \le \bfh\t\bB^{-1}\bfh.
\end{align*}
We have used that  $\bB$ is SPD.  Substituting this inequality into~\eqref{eqn:inter} completes the proof.
\end{proof}
Before introducing the greedy algorithm, we state a result that will be used later in \Cref{sec:iterative}. In contrast to the previous results, this one is specific to the multifidelity setting and has no counterpart in the single-fidelity case, as it directly compares sensors of different fidelities. Informally, the result shows that, for a fixed sensor location, placing a high-fidelity sensor is at least as informative as placing a low-fidelity sensor.
\begin{proposition}[Low-noise sensors are more informative]
\label{lower_variance_sensor}
Let $S = [\Sc,\Se]$ be a selection list with $\Sc \cap\Se = \emptyset$. Let $i \in \{1,2,\hdots,M\} \setminus \left(\Sc \cup\Se\right)$ be a sensor location, previously not selected. Then, $\Phi_D\left(S^{(\ch,i)}\right) \leq \Phi_D\left(S^{(\exp,i)}\right)$.
\end{proposition}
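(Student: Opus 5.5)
The plan is to reduce both sides of the inequality to rank-one updates of the same matrix and then compare the resulting scalar factors. Write $\bB = \bB(S)$ as in \Cref{prop:I+AAT}; it is symmetric positive definite. The column we add is, for either fidelity, a scaled copy of the same vector. Set $\bfv = \bSigma_{\pr}^{1/2}\bPsi\t \bfe_i$. Then $[\bAc]_{:i} = \sigmaC^{-1}\bfv$ and $[\bAe]_{:i} = \sigmaE^{-1}\bfv$. Hence
\[
\bB\bigl(S^{(\ch,i)}\bigr) = \bB + \sigmaC^{-2}\bfv\bfv\t,
\qquad
\bB\bigl(S^{(\exp,i)}\bigr) = \bB + \sigmaE^{-2}\bfv\bfv\t .
\]

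Next, apply the Matrix Determinant Lemma (\Cref{lemma:matrix-det-lemma}) to each of these, exactly as in the proof of \Cref{prop:monotone}. This gives
\[
\Phi_D\bigl(S^{(j,i)}\bigr) = \Phi_D(S) + \log\bigl(1 + \sigma_j^{-2}\,\bfv\t\bB^{-1}\bfv\bigr), \qquad j\in\{\ch,\exp\}.
\]
The claim therefore reduces to the scalar inequality
\[
1 + \sigmaC^{-2} q \le 1 + \sigmaE^{-2} q, \qquad q := \bfv\t\bB^{-1}\bfv .
\]

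Since $\bB$ is SPD, so is $\bB^{-1}$, and therefore $q \ge 0$. The standing assumption $\sigmaC > \sigmaE > 0$ gives $\sigmaC^{-2} < \sigmaE^{-2}$. Multiplying by $q\ge0$ yields the scalar inequality. Because $\log$ is increasing on $(0,\infty)$ and both arguments are at least $1$, the result follows.

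No step is a serious obstacle. The only point needing care is noticing that the two candidate columns are parallel, both proportional to $\bfv$, so that a single quadratic form $q$ controls both gains. An alternative route is to observe that $\bB(S^{(\exp,i)}) = \bB(S^{(\ch,i)}) + (\sigmaE^{-2}-\sigmaC^{-2})\bfv\bfv\t$. The added term is SPSD, so the inequality also follows directly from the monotonicity argument based on \Cref{cor:minkowski}.
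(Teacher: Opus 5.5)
Your proof is correct, but your main argument differs from the paper's. The closing ``alternative route'' you mention is exactly the paper's proof. The paper writes both objectives as $\log\det(\bB + \sigma_j^{-2}\bfa\bfa\t)$ and applies \Cref{cor:minkowski} to the splitting $\bB + \sigmaE^{-2}\bfa\bfa\t = (\bB + \sigmaC^{-2}\bfa\bfa\t) + (\sigmaE^{-2}-\sigmaC^{-2})\bfa\bfa\t$. It therefore uses only that the determinant does not decrease when an SPSD matrix is added. You instead use the matrix determinant lemma to collapse both sides to the scalars $1+\sigma_j^{-2}q$.

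\textbf{What your route buys.} It is more quantitative: it gives the exact gap $\Phi_D(S^{(\exp,i)}) - \Phi_D(S^{(\ch,i)}) = \log\frac{1+\sigmaE^{-2}q}{1+\sigmaC^{-2}q}$. It also shows the inequality is strict unless $q=0$, i.e.\ unless $\bfv = 0$, since $\bB^{-1}$ is SPD. The Minkowski bound does not give strictness for $\ell \ge 2$, because the rank-one term has zero determinant. Finally, it is the same computation that drives the marginal gains in \Cref{mfgreedySM} and the heuristic of \Cref{sec:heuristic}.

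\textbf{What the paper's route buys.} It never uses the rank-one structure. It would therefore go through unchanged whenever the expensive sensor's information term dominates the cheap sensor's in the Loewner order, for example with vector-valued measurements per location or non-isotropic noise.

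Your explicit definition of $\bfv$ also avoids a small notational slip in the paper. There, $\bfa$ is introduced as a column of $\bA_j$ but is then used as the unscaled vector inside $\bB + \sigma_j^{-2}\bfa\bfa\t$.
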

\begin{proof}

Consider the same notation as in the proof of \Cref{prop:monotone}. For each $j \in \{\ch,\exp\}$ we have 
\begin{align*}
\Phi_D\left(S^{(j,i)}\right) 
&= \log\det\left(\bB + \sigma_j^{-2}\bfa\bfa\t\right).
\end{align*}

Since  $\sigmaE \le \sigmaC$, we can apply \Cref{cor:minkowski} to get 
\[\Phi_D\left(S^{(\ch,i)}\right) = \log\det(\bB+\sigmaC^{-2} \bfa\bfa\t) \leq \log \det(\bB+\sigmaE^{-2} \bfa\bfa\t) = \Phi_D\left(S^{(\exp,i)}\right)\]
which is the desired result.
\end{proof}
\subsection{Greedy Algorithm}
\label{subsec:mfgreedy}
A natural extension of the single-fidelity greedy algorithm, following the formula in \cite[Eq.~(3)]{SVIRIDENKO200441}, 
addresses the cost-constrained multifidelity setting by maximizing the marginal improvement in the objective function, i.e., the improvement per unit cost.

This generalized greedy approach is summarized as follows. Initialize the sets $\Sc = \emptyset$ and $\Se  = \emptyset$. We maintain a variable $b_\curr$ to represent the current available budget, which is initialized as the total budget $\bud$. While there is a remaining budget (i.e., $b_\curr > 0$), define the sets  $J = \{ j \in \{\ch,\exp\}| c_j  \le b_\curr\}$, which represents the set of fidelities for which there is sufficient remaining budget, and $I \gets \{1,\dots,M\} \setminus \left(\Sc \cup\Se\right)$, which represents the remaining sensor locations. 

At each iteration, we choose the index pair $(j^*, i^*)$ that maximizes the marginal improvement:
\[(j^*, i^*) \in \argmax_{(j,i)\in J \times I} ~\frac{1}{c_j}\left({\Phi_D(S^{(j,i)}) - \Phi_D\left(S\right)}\right).\]

We then update the selected set by $S_{j^*} \gets S_{j^*} \cup \{i^*\}$ and reduce the remaining budget as $b_{\curr} \leftarrow b_{\curr} - c_{j^*}$.  
The process continues until no additional sensor can be afforded, namely $b_{\curr} < \cC$. A full pseudocode description of this multifidelity greedy selection procedure is provided in \Cref{mfgreedy}. 

\myparagraph{Remark}
Although we focus on the two-class setting throughout this work, the greedy framework extends naturally to an arbitrary number of sensor classes by replacing $\{\ch,\exp\}$ with a general index set of sensor fidelities and maximizing over all admissible fidelity-location pairs at each iteration. 

\begin{algorithm}[!ht]
\caption{Greedy Algorithm for Multifidelity Sensor Selection}
\label{mfgreedy}
\begin{algorithmic}[1]
\item[\textbf{Input:}]
Sensor costs $\cC < \cE$, measurement noise standard deviations $\sigmaC > \sigmaE$, budget $\bud > 0$, prior covariance matrix $\bSigma_{\mathrm{pr}}$, modal basis $\bPhi \in \R^{N \times \ell}$ 

\item[\textbf{Output:}] Selected sensor index sets $S = [\Sc,\Se]$
        \STATE Initialize $\Sc \gets \emptyset$, $\Se  \gets \emptyset$, $b_\curr \gets \bud$, $t \gets 1$
    \WHILE{$b_\curr \ge \cC$}
    \STATE $J \gets \{j \in \{\ch, \exp\} \mid c_j \le b_\curr\}$\hfill  \codecomment{Admissible fidelity types}
           \STATE $I \gets \{1,\ldots,M\} \setminus (\Sc \cup\Se)$ \hfill\codecomment{Available locations}
            \STATE\label{lin:greedydom} $(j_t^*, i_t^*) \gets \underset{(j,i) \in J \times I}{\arg\max} \{ {c^{-1}_j}({\Phi_D \left(S^{(j,i)}\right) - \Phi_D(S)} )\}$  \hfill \codecomment{Information gain per cost}
            \STATE ${S}_{j_t^*} \gets S_{j_t^*} \cup \{i_t^*\}$ and $S \gets [\Sc,\Se]$
            \STATE $b_\curr \gets b_\curr - c_{j^*_t}$ and $t \gets t + 1$
    \ENDWHILE
\end{algorithmic}
\end{algorithm}

\myparagraph{Computational Cost} \label{subsubsec:naive} 
We now analyze the computational cost of \Cref{mfgreedy}. The algorithm iteratively adds one sensor at each step until a total of $k$ sensors are selected. At iteration $1 \leq t \leq k$, the dominant operation corresponds to Line~\ref{lin:greedydom}, which seeks the pair $(j,i)$ that maximizes the marginal improvement in the objective $\Phi_D$ 
across each sensor fidelity and location, which requires evaluating $\Phi_D(S^{(j,i)})$ once for each fidelity $j$ for which there is enough remaining budget and each location $i$ at which a sensor has not already been placed. There are at most two such fidelities, and at the $t$-th iteration there are $M-t+1$ such locations, so the number of evaluations of $\Phi_D$ at the $t$-th iteration is $2(M-t+1)$. More generally, if $f$ sensor fidelity classes are considered, the number of evaluations at the $t$-th iteration scales as $f(M-t+1)$. 
The time complexity of the algorithm depends on how the evaluations are done. In the na\"ive approach, the matrix $\bI + \left(\bAc\bSc\right)\left(\bAc\bSc\right)\t+ \left(\bAe\bSe\right)\left(\bAe\bSe\right)\t$ is formed explicitly at each evaluation, and then its log-determinant is computed. Recall that $\ell$ denotes the number of reduced basis vectors in the matrix $\bPsi$. At the $t$-th iteration, forming this matrix requires $\bigO{t\ell^2}$ f{}lops, and computing its log-determinant requires $\bigO{\ell^3}$ f{}lops. Thus, the total cost is
\begin{align*}
\bigO{\sum_{t=1}^k 2(M-t+1)(t\ell^2 + \ell^3)} 
&= \bigO{\ell^2\sum_{t=1}^k 2t(M-t+1) + \ell^3 \sum_{t=1}^k 2(M-t+1)}.
\end{align*}
Expanding the left sum,
\begin{align*}
\ell^2\sum_{t=1}^k 2t(M-t+1) &= 2\ell^2\left((M+1)\sum_{t=1}^k t - \sum_{t=1}^k t^2\right)
\\&= \ell^2 \left((M+1)(k+1)k - \frac{k(k+1)(2k+1)}{3}\right)
\\&= \bigO{k^2M\ell^2},
\end{align*}
since $k \leq M$. Similarly, for the right sum, we have
\begin{align*}
\ell^3 \sum_{t=1}^k 2(M-t+1) = \ell^3\left(2k(M+1) - k(k+1)\right) = \ell^3(2kM-k^2+k) = \bigO{kM\ell^3}.
\end{align*}
Combining the two sums, the total cost is $\bigO{kM\ell^2(k+ \ell)}$ f{}lops. We now discuss a more efficient implementation. 
\subsection{Greedy Algorithm with Sherman--Morrison updates}
\label{subsec:sherman}
A direct evaluation of the D-optimality objective~$\Phi_D$ can be prohibitively expensive, particularly in large-scale settings (see \Cref{subsubsec:naive}). To reduce the computational burden, we exploit the structure of rank-one updates arising in the greedy selection process. In particular, we combine the matrix determinant lemma (\ref{eq:Matrix_Determinant}) with the Sherman--Morrison formula~\eqref{eq:sherman} to enable efficient updates of both the objective value and the marginal gains.
\myparagraph{Main idea} Suppose at iteration $1 \leq t \leq k$  we have selected sensors $S = [\Sc,\Se]$, with corresponding selection matrix
$\bS = \begin{bmatrix}
    \bSc & \bSe
\end{bmatrix}$. As in \Cref{prop:I+AAT}, define the matrix
\begin{align*}
\bB_{t-1} = \bI + \left(\bAc\bSc\right)\left(\bAc\bSc\right)\t+ \left(\bAe\bSe\right)\left(\bAe\bSe\right)\t, \quad \text{with} \quad \bB_0 = \bI.
\end{align*}
By construction, the D-optimality objective can be written as $\Phi_D(S)= \log\det(\bB_{t-1})$.
Consider augmenting the current design by adding a sensor of fidelity $j \in \{\ch,\exp\}$ at location $i \in I$, then by~\Cref{prop:I+AAT}, 
 the corresponding objective value is
\begin{align} \label{eq:Phi_candidate}
\Phi_D\!\left(S^{(j,i)}\right)
= \log\det\!\left(\bB_{t-1} + [\bA_j]_{:i}[\bA_j]_{:i}\t \right),
\end{align}
where $[\bA_j]_{:i}$ denotes the $i$-th column of $\bA_j$ for $j \in J$ and $i \in I$.  
The argument of the log-determinant in~\eqref{eq:Phi_candidate} is a rank-one update of the matrix $\bB_{t-1}$.
Therefore, by the matrix determinant lemma (\ref{eq:Matrix_Determinant}), the marginal gain associated with adding this sensor is given by
\begin{align*}
\label{SM1}
{\Phi_D(S^{(j,i)}) - \Phi_D(S)} = & \> {\log\det(\bB_{t-1}+[\bA_j]_{:i}[\bA_j]_{:i}\t) - \log\det(\bB_{t-1})} \\
=& \> {\log(1+[\bA_j]_{:i}\t\bB_{t-1}^{-1} [\bA_j]_{:i})}. 
\end{align*}

Let $\bfu_t = [\bA_{j_t^\ast}]_{:i_t^\ast}$ denote the column corresponding to the selected fidelity $j_t^\ast$ and location $i_t^\ast$ at iteration $t$, as determined by \Cref{mfgreedy},  with  $\bfu_0 = \b{0}$.
The matrix $\bB_t$ then admits the update
\[
\bB_t = \bB_{t-1} + \bfu_t \bfu_t\t.
\]
Since this is a symmetric rank-one update, $\bB_t$ remains SPD for all $t$.
Rather than forming or storing $\bB_t^{-1}$; explicitly, we maintain the auxiliary matrices\footnote{In fact, we actually only need to store $\bB_t^{-1} [\bA_j]_{:i}$ for fidelities $j$ for which there is enough remaining budget and locations $i$ where a sensor has not already been placed, but for clarity of exposition we store the entire matrix $\bB_t^{-1} \bA_j$.} $\bD_j^{(t)} \equiv \bB_t^{-1} \bA_j$ for $j \in \{\ch, \exp\}$ and $t \ge 0$. Since $\bB_0 = \bI$, we initialize $\bD_j^{(0)} \gets \bA_j$ for each fidelity $j \in \{\ch, \exp\}$. 
To update $\bD_j^{(t)}$ efficiently, we apply the Sherman--Morrison formula.
For each $j \in \{\ch,\exp\}$,
\begin{align}\label{eq:InPlaceUpdate}
 \begin{aligned} \bD_j^{(t)} = & \>  \bB_{t}^{-1}\bA_j \\ = & \> \left(\bB_{t-1}^{-1} - \frac{(\bB_{t-1}^{-1}\bfu_{t})(\bB_{t-1}^{-1}\bfu_{t})\t}{1 + \bfu_{t}\t\bB_{t-1}^{-1}\bfu_{t} } \right) \bA_j
  = \> \left(\bI - \frac{1}{1 + \bfu_{t}\t\bfv_{t}}\bfv_{t} \bfu_{t}\t\right)\bD_j^{(t-1)},  
 \end{aligned}
 \end{align}
where $\bfv_{t} = \bB_{t-1}^{-1}\bfu_{t} = [\bD^{(t-1)}_{j_{t}^*}]_{i_{t}^*}$ for $t\ge 1$. Note that the last line holds as $\bB_{t-1}^{-1}$ is a symmetric matrix.
That is, as $\bD_{j_{t}^*}^{(t-1)}$ is a matrix corresponding to the optimal fidelity $j_t^*$, we take $\bfv_{t}$ to be a column of this matrix corresponding to the optimal location $i_{t}^*$. This gives an efficient way to update the matrices $\bD_j^{(t)}$, without having to form and store $\bB_t$ or $\bB_t^{-1}$ at all.  
\myparagraph{Summary}
At iteration $t \ge 1$, the algorithm selects a new sensor by maximizing the marginal gain in the D-optimality criterion, normalized by sensor cost.
Specifically, among all fidelities $j$ for which sufficient budget remains and all locations $i \in I$ at which a sensor has not yet been placed, the greedy selection in Line~\ref{lin:greedydom} is given by
\begin{equation}
\label{eq:mfgreedySM-objective}
    (j_t^*,i_t^*) = \argmax_{(j,i) \in J\times  I} \left\{{c_j^{-1}\ }{\log\left(1+[\bA_j]_{:i}\t[\bD^{(t-1)}_{j}]_{:i}\right)}\right\}.
\end{equation}
The selected sensor of fidelity $j_t^\ast$ is placed at location $i_t^\ast$, after which the remaining budget is updated according to
$\bud \gets \bud - c_{j_t^\ast}$.
The auxiliary matrices $\bD_j^{(t)}$ are then updated for each $j \in \{\ch,\exp\}$ using the Sherman--Morrison update derived above.
This procedure is repeated until the remaining budget is insufficient to place any additional sensor.
A complete pseudocode description of the algorithm is provided in \Cref{mfgreedySM}.
\begin{algorithm}[!ht]
\newpage

\begin{algorithmic}[1]
\caption{Multifidelity Greedy Algorithm with Sherman-Morrison Updates}
\label{mfgreedySM} 
\item[\textbf{Input:}] Sensor costs $\cC < \cE$, measurement noise standard deviations $\sigmaC > \sigmaE$, budget $\bud > 0$, prior covariance matrix $\bSigma_{\mathrm{pr}}$, modal basis $\bPhi \in \R^{N \times \ell}$ \\
\item[\textbf{Output:}] List of index sets [$\Sc,\Se$], the near-optimal cheap and expensive sensor indices, respectively 
    \STATE $\bAc \gets \sigmaC^{-1} {\bSigma_{\pr}^{1/2}\bPsi^{T}}{}$, $\bAe \gets \sigmaE^{-1}{\bSigma_{\pr}^{1/2}\bPsi^{T}}$
    \STATE $I\gets \{1,...,M\}$,  $J\gets \{\ch,\exp \}$, $\Sc \gets \emptyset$ and $\Se  \gets \emptyset$
    \STATE Initialize $\bD_j^{(0)} \gets \bA_j$ for each $j \in J$
    \STATE $b_{\curr} \gets \bud$ and $t \gets 1$
        \WHILE{$b_\curr \ge \cC$}
            \STATE $J \gets \{j  \in J \mid c_j \leq b_\curr\}$  \hfill \codecomment{Admissible fidelity types}
            \STATE $(j_t^*,i_t^*) \gets \underset{(j,i) \in J \times I}{\arg\max} \left\{{c_j^{-1}\ }{\log\left(1+[\bA_j]_{:i}\t[\bD^{(t-1)}_{j}]_{:i}\right)}\right\}$
            \STATE ${S}_{j^*_{t}} \gets S_{j^*_{t}} \cup \{i^*_{t}\}$
             \STATE $I\gets I \setminus \{i^*_{t}\}$
             \hfill 
           \codecomment{Set of locations where a sensor has not been placed}
            \STATE $\bfu_t \gets [\bA_{j_{t}^*}]_{: i_{t}^*}$
            \STATE $\bfv_{t} \gets [\bD^{(t-1)}_{j_{t}^*}]_{: i_{t}^*}$
            \FOR{$j \in J$}
            \STATE $\bD_j^{(t)} \gets \bD_j^{(t-1)} - \frac{1}{1 + \bfu_{t}\t\bfv_{t}}\bfv_{t} \bfu_{t}\t\bD_j^{(t-1)}$
            \ENDFOR
            \STATE $b_{\curr} \gets b_{\curr} - c_{j_t^*}$ and  $t \gets t+1$
    \ENDWHILE
     \RETURN $[\Sc,\Se]$
\end{algorithmic}
\end{algorithm}
\myparagraph{Computational cost}\label{para:greedy-sm-cost} As in \Cref{subsubsec:naive}, this improved greedy procedure needs $\bigO{kM}$ evaluations of marginal gains.
However, in the present formulation, each evaluation reduces to computing the objective in~\eqref{eq:mfgreedySM-objective}, which requires $\bigO{\ell}$ f{}lops.
This reduction in the cost of evaluating the marginal gain is accompanied by the expense of updating the auxiliary matrices $\bD_j^{(t)}$ via the Sherman--Morrison formula~\eqref{eq:InPlaceUpdate}.
Since $\bfu_t \in \mathbb{R}^\ell$ and $\bD_j^{(t-1)} \in \mathbb{R}^{\ell \times M}$, each such update requires $\bigO{M\ell}$ flops per iteration, as $\bfu_t \in \R^\ell$ and $\bD_j^{(t-1)} \in \R^{\ell \times M}$.
Consequently, the total computational cost over $k$ iterations is
\[
\bigO{\ell(kM) + k(M\ell)} = \bigO{kM\ell} ~\text{f{}lops.}
\]
This is much more efficient compared to the na\"ive greedy implementation (\Cref{mfgreedy}). 
\subsection{Theoretical Performance}
\label{subsec:performance}
An important question is whether the proposed greedy strategy admits a worst-case performance guarantee comparable to those available in the single-fidelity setting (cf.~\Cref{subsec:single-fidelity}). In particular, one might ask whether there exists a constant $\alpha \in (0,1)$ such that, for all admissible problem instances, the D-optimality value achieved by the greedy solution is at least an $\alpha$-fraction of the optimal value.

The result below shows that such a uniform constant-factor approximation guarantee does not hold in general. Using a counterexample, adapted from a related argument in \cite{Leskovec}, we demonstrate that the performance of the greedy algorithm can be arbitrarily poor relative to the optimum.
\begin{proposition}[Greedy is not a constant factor approximation]
\label{prop:counterexample}
Given $\epsII \in (0,1)$ and the matrices $\bSigma_\pr$ and $\bPhi$, there exists a set of candidate locations $\S_\mathrm{cand} \subseteq \{1,2,\hdots,N\}$, measurement noise standard deviations $\sigmaC > \sigmaE > 0$, costs $\cE > \cC > 0$, and a budget $\bud  > 0$ such that, letting $S_{\mathrm{opt}}$ and $S_{\mathrm{greedy}}$ be an optimal solution and the solution produced by \Cref{mfgreedySM}, respectively, we have
\[\Phi_D(S_{\mathrm{greedy}}) \leq \epsII \Phi_D(S_{\mathrm{opt}}).\]
\end{proposition}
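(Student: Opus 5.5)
We adapt the classical knapsack counterexample of \cite{Leskovec}: the cost-normalized greedy rule is lured into spending the whole budget on a cheap sensor with a slightly better gain-per-cost ratio, so it can no longer afford a far more informative expensive sensor. Throughout we work in the column subset selection form of \Cref{prop:I+AAT}. With $\bfa_i := \bSigma_\pr^{1/2}\bPsi\t\bfe_i$, a cheap sensor at $i$ contributes $\sigmaC^{-2}\bfa_i\bfa_i\t$ and an expensive one contributes $\sigmaE^{-2}\bfa_i\bfa_i\t$. From an empty design, the matrix determinant lemma (\Cref{lemma:matrix-det-lemma}) gives the first-step gains
\[
g_{\ch}(i)=\log\bigl(1+\sigmaC^{-2}\|\bfa_i\|^2\bigr), \qquad g_{\exp}(i)=\log\bigl(1+\sigmaE^{-2}\|\bfa_i\|^2\bigr).
\]

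\textbf{Construction.} Take $\S_{\rm cand}=\{i_0\}$, a single location with $\bfa:=\bfa_{i_0}\neq\b{0}$. Such a row exists because $\bPhi$ has orthonormal columns and $\bSigma_\pr$ is SPD. Set $\cC=1$, $\cE=\bud=c>1$, and $\sigmaC=1$, so that $g_{\ch}=\log(1+\|\bfa\|^2)$ is fixed. For any $c$, choose $\sigmaE<\sigmaC$ so that
\[
\tfrac{1}{c}\log\bigl(1+\sigmaE^{-2}\|\bfa\|^2\bigr) < \log(1+\|\bfa\|^2).
\]
This is possible for every $\sigmaE$ close enough to $1$, and in particular for the tuned value below.

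\textbf{Greedy run.} Greedy compares the two admissible options and picks the cheap sensor at $i_0$. Afterward no location remains, so $\Phi_D(S_{\rm greedy})=\log(1+\|\bfa\|^2)$. Placing the expensive sensor instead is feasible and gives $\Phi_D(S_{\rm opt})\ge \log(1+\sigmaE^{-2}\|\bfa\|^2)$. We need this to be at least $\epsII^{-1}\log(1+\|\bfa\|^2)$ while its gain per cost stays below $\log(1+\|\bfa\|^2)$.

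\textbf{Tuning the parameters.} Let $G:=\epsII^{-1}\log(1+\|\bfa\|^2)+1$, choose $\sigmaE$ with $\log(1+\sigmaE^{-2}\|\bfa\|^2)=G$, and then pick $c>G/\log(1+\|\bfa\|^2)$. Both constraints now hold. The catch is that this $\sigmaE$ is below $1$, so $\sigmaE<\sigmaC$, which is the assumption we want, and the condition $\cE>\cC$ holds. Ties in the argmax are excluded because all inequalities are strict.

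\textbf{Main obstacle: degenerate candidate set.} If a single candidate location is considered too degenerate, we add two locations: $i_1$, spanned by the cheap sensor choice, and $i_2$. Greedy's cheap pick at $i_1$ leaves budget $c-1<\cE$, so it can then only add cheap sensors. The expensive option at $i_2$ stays dominant. The delicate point is bounding the cheap follow-ups by the total $(c-1)$ cheap gains. We keep these small by choosing $\|\bfa_{i_1}\|$ tiny relative to $\|\bfa_{i_2}\|$, while keeping the ratio so the first greedy step still picks the cheap sensor. This requires care in ordering the choices of $\sigmaE$, $c$, and the candidates. The single-location version avoids the issue entirely, so it is the construction we use.
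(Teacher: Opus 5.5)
Your proof is correct and is essentially the paper's argument: both use a single candidate location, with noise levels and costs tuned so that the cheap sensor has the larger gain per unit cost while the expensive sensor's total gain exceeds $\epsII^{-1}$ times the cheap one's. The paper normalizes the two gains to exactly $\epsII$ and $1$ through $\sigmaC,\sigmaE$ and takes $\cC=\epsII/2$, $\cE=1\le\bud$, whereas you fix $\sigmaC=\cC=1$ and tune $\sigmaE$ and $\cE=\bud$; your explicit choice of $i_0$ with a nonzero row of $\bPhi$ also justifies $\bfa\neq 0$ (the paper's $x>0$), which the paper asserts without comment, and the two-location digression is unnecessary.
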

\begin{proof}
Let $\S_\mathrm{cand} \subseteq \{1,2,\hdots,N\}$ with $|\S_\mathrm{cand}|=1$, i.e., there is only a single candidate sensor location. Then $\bPsi = \bS_\mathrm{cand}\t \bPhi$ has only one row, which implies $\bfa := \bSigma_\pr^{1/2} \bPsi\t$ is a column vector, so let $x = \bfa\t\bfa > 0$. Let $\sigmaC = x^{1/2} (e^\epsII-1)^{-1/2}$ and let $\sigmaE = x^{1/2} (e-1)^{-1/2}$, and notice $\sigmaC > \sigmaE > 0$. Let $\cC = \frac\epsII2$ and let $\bud \geq \cE = 1$. Since there is only one candidate location, the two sensor configurations to consider are $[\{1\}, \emptyset]$ and $[\emptyset,\{1\}]$, which correspond to placing a cheap sensor or an expensive sensor, respectively, at the candidate location $1$. By \Cref{prop:I+AAT} and \Cref{lemma:matrix-det-lemma},
\begin{align*}
&\Phi_D([\{1\}, \emptyset]) = \log\det(\bI + \sigmaC^{-2} \bfa\bfa\t) = \log(1 + \sigmaC^{-2}x) = \log(1+x^{-1}(e^\epsII-1)x) = \epsII,
\\&
\Phi_D([\emptyset,\{1\}]) = \log\det(\bI + \sigmaE^{-2} \bfa\bfa\t) = \log(1 + \sigmaE^{-2}x) = \log(1+x^{-1}(e-1)x) = 1.
\end{align*}
As $\epsII < 1$, the optimal solution is $S_\mathrm{opt}=[\emptyset,\{1\}]$. However, because the marginal gains are
\begin{align*}
&\cC^{-1}\left(\Phi_D([\{1\}, \emptyset])-\Phi_D([\emptyset, \emptyset])\right) = \frac2\epsII (\epsII-0) = 2,
\\&\cE^{-1}\left(\Phi_D([\emptyset,\{1\}])-\Phi_D([\emptyset, \emptyset])\right) = 1(1-0) = 1,
\end{align*}
the greedy solution is $S_\mathrm{greedy}=[\{1\},\emptyset]$. Then, since $\Phi_D(S_\mathrm{greedy}) = \epsII$ and $\Phi_D(S_\mathrm{opt}) = 1$, we have the result $\Phi_D(S_\mathrm{greedy}) \leq \epsII\Phi_D(S_\mathrm{opt})$.
\end{proof}
It is important to emphasize that the conclusion of \Cref{prop:counterexample} is independent of the training data.
That is, regardless of the properties of the data or the resulting behavior of the posterior covariance as a function of the sensor configuration, one can construct problem instances for which the greedy algorithm performs arbitrarily poorly.
The proof highlights a fundamental limitation of the greedy strategy: it may favor the placement of a cheap sensor over an expensive alternative, even in situations where the latter yields a substantially larger improvement in the objective.

Such behavior is clearly suboptimal in settings where sufficient budget remains to place a higher-fidelity sensor and where no additional sensor placements are possible thereafter, as occurs, for example, when only a single candidate location is available.
Nevertheless, despite this unfavorable worst-case behavior, the greedy algorithm exhibits strong empirical performance in the numerical experiments reported in \Cref{sec:experiments}.

\section{Iterative Selection} 
\label{sec:iterative}
We now introduce a novel iterative framework for multifidelity sensor placement and analyze its computational cost.  

\subsection{Algorithm} 
The approach consists of two main components. First, we identify the set of admissible allocations $(\kc,\ke)$ of cheap and expensive sensors that satisfy the budget constraint. Second, for each admissible allocation, we determine the corresponding sensor locations by alternately optimizing over one fidelity class at a time, selecting all sensors of a given fidelity before switching to the other. 
The algorithm implements the two components as follows.

\myparagraph{Phase 1: Pruning feasible allocations} \label{iter_phase_I} 
We construct a candidate set $\mathcal K$ of candidate sensor allocations $(\kc,\ke)$, pruned from the set of feasible allocations based on \Cref{add_sensor,lower_variance_sensor}.
When the total budget is large relative to sensor costs, enumerating all feasible allocations is computationally intractable. To reduce the search space, we employ some of the results presented in \Cref{subsec:properties}, namely \Cref{add_sensor,lower_variance_sensor}.

Informally, \Cref{add_sensor} states that adding a sensor can never reduce the objective function $\Phi_D$. Hence, any sensor allocation that leaves enough unused budget to add another sensor can be discarded, as it is feasible to add another sensor, and such an addition cannot reduce the objective function. In particular, we ignore all sensor allocations $(\kc,\ke)$ satisfying $\cC \kc + \cE \ke \leq \bud - \cC$, as such allocations leave enough budget to add a cheap sensor. Similarly, \Cref{lower_variance_sensor} states that, for any given location, an expensive sensor there is at least as informative as a cheap sensor there. Thus, we can discard all sensor allocations that call for at least one cheap sensor and leave enough remaining budget to replace a cheap sensor with an expensive one. That is, we ignore all $(\kc,\ke)$ satisfying both $\kc > 0$ and $\cC \kc + \cE \ke \leq \bud - (\cE-\cC)$.

More concretely, we initialize $\K = \emptyset$, and then, for each fixed feasible value of $\ke$, we consider only the maximal value of $\kc$ subject to this $\ke$, as any lesser value of $\kc$ can be discarded by \Cref{add_sensor}. We then apply \Cref{lower_variance_sensor} to determine if the pair $(\kc,\ke)$ can be discarded. If it cannot be discarded, that is, if $\kc = 0$ or $\cC \kc + \cE \ke > \bud - (\cE - \cC)$, we add $(\kc,\ke)$ to $\mathcal K$. 
Since we only consider one value of $\kc$ for each feasible value of $\ke$, after this procedure, $\K$ contains at most one candidate allocation for each feasible $\ke$. Since we must have $0 \leq \ke \leq {\bud}/{\cE}$, at the end of the process we have the bound $|\mathcal K| \leq \floor{1+{\bud}/{\cE}}$. In contrast, the total number of feasible allocations is $\sum_{\ke = 0}^{\floor{\bud/\cE}} \floor{1 + ({\bud  - \cE \ke})/{\cC}}$, since, given a fixed feasible value of $\ke$, there are $\floor{1 + ({\bud  - \cE \ke})/{\cC}}$ feasible values of $(\kc, \ke)$. Thus, this method of candidate generation drastically cuts down the search space of allocations.
This is illustrated in \Cref{tab:pruning} with different values of the costs $\cC$ and $\cE$ and the budget fixed at $\bud=100$. As can be seen, the number of candidate allocations is much smaller than the total number of possible candidates. Furthermore, the upper bound tracks the number of candidate allocations well. 
\begin{table}[H]
    \renewcommand{\arraystretch}{1.3}
    \setlength{\tabcolsep}{8pt}
    \begin{tabular}{c c c c}
        \toprule$\bm{\cC/ \cE}$ & \textbf{\# Feasible allocations } & $\bm{|\mathcal K |}$
        & $\mathbf{\floor{1 + \bud/\cE}}$\\
        \midrule
        1/2 & 2601 & 51 & 51 \\

        2/3 & 884 & 18 & 34 \\

        3/5 & 364 & 14 & 21 \\

        5/11 & 107 & 10 & 10 \\
        \bottomrule
    \end{tabular}
        \centering
    \caption{Comparison of feasible allocations, candidate allocations, and the upper bound on candidate allocations. The budget is fixed at $\bud =100$.}
    \label{tab:pruning}
\end{table}
\begin{algorithm}[!ht]
\caption{Subroutine: Greedy Selection of Sensors}
\label{subroutine:greedyselect}

\begin{algorithmic}[1]
\item[\textbf{Input:}] Fidelity $j \in \{\ch,\exp\}$ of sensors to select, number $k$ of sensors to select, selection set $S_{\rm prev}$ of existing sensors of the other fidelity. 

\item[\textbf{Output:}] Selection set $S$ of $k$ sensors of fidelity $j$. 

\STATE \textbf{function} \textsc{GreedySelect}($j, k, S_{\rm prev}$)
\STATE Initialize $S \gets \emptyset$
\WHILE{$|S| < k$}
    \STATE $I \gets \{1,\dots,M\} \setminus (S \cup S_{\rm prev})$ \hfill \codecomment{Available locations}
    \IF{j = $\ch$}
        \STATE $\displaystyle i^* \gets \argmax_{i \in I} \Phi_D([S \cup \{i\}, S_{\rm prev}]) - \Phi_D([S, S_{\rm prev}])$ 
    \ELSE
        \STATE $\displaystyle i^* \gets \argmax_{i \in I} \Phi_D([S_{\rm prev}, S \cup \{i\}]) - \Phi_D([ S_{\rm prev}, S])$
    \ENDIF
    \STATE $S \gets S \cup \{i^*\}$
\ENDWHILE
\RETURN $S$
\end{algorithmic}
\end{algorithm}
\myparagraph{Phase 2: Sensor placement} 
\label{iter_phase_II}
After constructing the pruned candidate set $\mathcal K$, we iterate over each allocation $(\kc,\ke) \in \mathcal K$. For each allocation $(\kc,\ke)$, we alternately optimize over the cheap and the expensive sensors using the greedy algorithm. We need a slightly modified version of the greedy approach which is detailed in \Cref{subroutine:greedyselect}. Our implementation accelerates this algorithm using the Sherman--Morrison updates, but for brevity, we do not include the details here.

We initialize $\Sc^{(0)} = \emptyset$ and obtain $\Se ^{(0)}$ by applying the single-fidelity greedy algorithm described in \Cref{subsec:single-fidelity} with parameters $\sigma = \sigmaE$ and $k = \ke$. Then, at iteration $t \ge 1$, we consider the objective function
\[ \Psi_{t,\rm ch}(\Sc) = \Phi_D\left(\bI + (\bAe\bSe^{(t-1)})(\bAe\bSe^{(t-1)})\t+ (\bAc\bSc) (\bAc\bSc)\t\right),\]
and compute $\Sc^{(t)}$ by greedily optimizing $\Psi_{t,\rm ch}$ in $\Sc$, subject to the constraints that $|\Sc| = \kc$ and $\Sc \cap\Se^{(t-1)} = \emptyset$. This can be done using \Cref{subroutine:greedyselect} with inputs $j = \ch$, $k=\kc$, and $S_{\rm prev} = \Se^{(t-1)}$. Then, consider the objective function
\[ \Psi_{t,\rm exp}(\Se) = \Phi_D\left(\bI + (\bAe\bSe)(\bAe\bSe)\t+ (\bAc\bSc^{(t)}) (\bAc\bSc^{(t)})\t\right),\]
and compute $\Se ^{(t)}$ by greedily optimizing $\Psi_{t,\rm exp}$ in $\Se$ subject to the constraints that $|\Se| = \ke$ and $\Se  \cap \Sc^{(t)} = \emptyset$. As before, this can be done by calling  \Cref{subroutine:greedyselect} with inputs $j = \exp$, $k=\ke$, and $S_{\rm prev} = \Sc^{(t)}$.

After each optimization step (either cheap or expensive), the iterations are terminated if a maximum number of iterations has been reached or if the objective function decreases. In our experiments, the algorithm typically terminates well before reaching the prescribed maximum number of iterations, with continued iterations yielding no substantial subsequent increase in the objective function. If the criterion is not satisfied, we continue by moving to the next greedy optimization. 
If the criterion is satisfied, we stop and set
$[\Sc^{(\kc,\ke)},\Se^{(\kc,\ke)}]$
to be a configuration achieving the greatest objective value among all configurations for the current allocation $(\kc,\ke)$ whose objective value has been computed.

Finally, after carrying out this process for all candidates $(\kc,\ke)$, we have one sensor configuration $\left[\Sc^{(\kc,\ke)},\Se^{(\kc,\ke)}\right]$ for each candidate allocation $(\kc,\ke) \in \mathcal K$. 
We identify the candidate configuration that maximizes the objective function and select it as the near-optimal solution. 

The complete pseudocode for both phases in the procedure is detailed in \Cref{alg:itergreedy}, provided in \Cref{sec:appendix_algo}.

Before proceeding, we note that, in contrast to the greedy algorithm, extending this approach to an arbitrary number of sensor fidelities is not entirely trivial, as the generation of candidate allocations in Phase 1 relies heavily on the two-class assumption. Thus, we give a detailed summary of such an extension, along with an analysis of its computational cost, in \Cref{sec:appendix_extension}.

\subsection{Computational cost} 
Phase~1 of \Cref{alg:itergreedy} constructs a set $\mathcal K$ of candidate sensor allocations using
\Cref{add_sensor,lower_variance_sensor}. The resulting set contains at most
$1 + \bud/\cE$ candidate pairs $(\kc,\ke)$. Since each candidate is generated with constant work,
the total cost of Phase~1 is $\bigO{\bud/\cE}$ flops.

In Phase~2, the algorithm processes each candidate $(\kc,\ke)\in\mathcal K$ independently.
For a given candidate, we first greedily initialize $\ke$ expensive sensors, which requires
$\bigO{\ke M\ell}$ flops.
The algorithm then performs at most $T$ iterations of greedy refinement.
Each refinement iteration consists of two steps: 
(i) selecting $\kc$ cheap sensors conditional on the
current set of expensive sensors, and 
(ii) selecting $\ke$ expensive sensors conditional on the
current set of cheap sensors.

Using the Sherman--Morrison-based implementation described in \Cref{subsec:sherman},
each greedy selection requires $\bigO{(\kc+\ke)M\ell}$ flops.
Indeed, forming the matrices $\bD_\ch$ and $\bD_\exp$ associated with the
currently selected sensors costs $\bigO{\ke M\ell}$ flops, while the subsequent greedy
selection of $\kc$ cheap sensors contributes an additional $\bigO{\kc M\ell}$ flops, see \Cref{para:greedy-sm-cost}. 

Consequently, the total cost for processing a single candidate $(\kc,\ke)$ in Phase~2 is
$\bigO{(\kc+\ke)T M\ell}$ flops. Since all admissible allocations satisfy\footnote{It is possible to calculate the time complexity by summing over the allocations in $\mathcal K$ instead of applying this bound. One can verify that doing so does not give a tighter Big-O time complexity, although we omit that calculation due to its length.}
$\kc+\ke \le \bud/\cC$, it follows that the cost per candidate is bounded by 
$\bigO{(\bud/\cC)T M\ell}$ flops. 
Since $|\mathcal K| = \bigO{\bud/\cE}$ and each candidate requires
$\bigO{(\bud/\cC) T M\ell}$ flops, Phase~2 has total cost
$\bigO{\bud^2/(\cC\cE)\, T M\ell}$. This term dominates the cost of Phase~1,
and therefore the overall computational complexity of
\Cref{alg:itergreedy} is
\[
\bigO{\frac{\bud^2}{\cC\cE}\, T M\ell} \> \text{flops}.
\]
\subsection{Comparison with greedy}
The aforementioned bound $\kc+\ke \le \bud/\cC$ implies that the computational complexity of \Cref{mfgreedySM} is $\bigO{\frac{\bud}{\cC} M \ell}$ flops, meaning that the iterative algorithm is asymptotically at least as expensive as the greedy algorithm. Thus, we now justify the use of the iterative algorithm over the greedy approach despite the increased computational expense. As demonstrated in \Cref{subsec:performance}, the greedy algorithm may favor the placement of a cheap sensor over an expensive one, even when the latter would lead to a substantially larger improvement in the objective, which can result in a clearly suboptimal configuration, particularly when the number of candidate sensor locations is very small. More generally, a principal drawback of the greedy algorithm is that it considers only one allocation $(\kc,\ke)$ (namely, the one called for by the sensor configuration it builds up), which may be an extremely poor allocation, i.e., even the optimal sensor configuration among those calling for this allocation may be a highly suboptimal solution to the cost-constrained problem we consider. 

In contrast, the iterative algorithm considers all feasible allocations $(\kc,\ke)$ that may be optimal, ensuring that an optimal allocation is at least considered in the process of choosing a sensor configuration, thus addressing the described issue faced by the greedy algorithm. This advantage of the iterative algorithm over the greedy approach is illustrated by considering the example problem instance in \Cref{prop:counterexample}, in which Phase 1 of the iterative algorithm would discard the allocation $(\kc,\ke)=(1,0)$ because $\cC\kc + \cE\ke = \cC \leq (\bud - \cE)+\cC=\bud - (\cE-\cC)$. Therefore, at the end of Phase 1, the set of candidate allocations would be $\mathcal K = \{(0,1)\}$, and, since the only configuration with this allocation is the optimal configuration $[\emptyset, \{1\}]$, the iterative algorithm would choose the optimal solution. 
\section{Numerical Experiments}
\label{sec:experiments}
In this section, we provide a broad set of experiments to test the performance of the algorithms presented in \Cref{subsec:sherman} and \Cref{sec:iterative}. Throughout this section, we report the number of iterations performed by the Iterative algorithm (\Cref{alg:itergreedy}). Here, one iteration consists of a greedy optimization step over the cheap sensors followed by a greedy optimization step over the expensive sensors. Recall from \Cref{sec:iterative} that the parameter $T$ denotes the prescribed maximum number of iterations allowed per candidate allocation. In all experiments, we set $T=1000$ although the algorithm typically terminates much earlier.
We let $T_\mathrm{obs}$ denote the total number of actual iterations carried out across all candidate allocations in $\K$.
\myparagraph{Datasets} 
We evaluate the proposed methods on two benchmark datasets. The first is the National Oceanic and Atmospheric Administration (NOAA) Sea Surface Temperature (SST) dataset, which consists of 1713 weekly temperature fields (in degrees Celsius) collected between 1990 and 2022 \cite{noaa_oisst_v2}. Each snapshot is defined on a $360 \times 180$ spatial grid.

The second dataset corresponds to a two-dimensional viscous flow past a cylinder, capturing 1501 snapshots of the characteristic von Kármán vortex street \cite{Guenther17}.
Each snapshot consists of two velocity components, $u$ and $v$, each discretized on a $640 \times 80$ spatial grid. For these experiments, we use only the $u$-component, corresponding to the velocity in the horizontal direction.

For both datasets, the snapshots are partitioned chronologically, with the first $70\%$ used for training and the remaining $30\%$ reserved for testing. A reduced basis $\bPhi \in \R^{N \times \ell}$ is constructed from the training data via a thin singular value decomposition, where $\ell$ is chosen as the smallest dimension capturing $99\%$ of the cumulative singular value energy, i.e., the sum of the squares of the $\ell$ largest singular values is at least $99\%$ of the sum of the squares of all singular values. The prior covariance matrix is then formed according to \eqref{eq:priorCov} with parameter value $\lambda = 0.01$. Reconstruction accuracy is assessed on the testing set, and we report the average relative reconstruction error over all test snapshots in the 2-norm, i.e., $\|\bfu - \widehat{\bfu}\|_2 / \|\bfu\|_2$. For visualization purposes, we show a representative test snapshot from the SST dataset recorded on June 14, 2020, together with a mid-trajectory snapshot from the vortex shedding sequence in the flow past a cylinder dataset; see \Cref{tab:allocs}.

\begin{table}[!ht]
  \centering
  \begin{tabular}{c c c c c c c}
    \toprule
    \textbf{Dataset} & \bm{$M/N$} &\textbf{\# modes ($\bm{\ell}$)}  & \textbf{ALG} & $\bm{\kc}$ & $\bm{\ke}$ & \textbf{RE} \\
    \midrule
    \multirow{2}{*}{SST}
      & \multirow{2}{*}{44219/44219} & \multirow{2}{*}{178} & Greedy  & 20 &  5 & 0.1038 \\
      &        &         & Iterative &  1 & 10 & 0.0978 \\
    \midrule
    \multirow{2}{*}{\shortstack{Flow\\past Cylinder}}
      & \multirow{2}{*}{51200/51200} & \multirow{2}{*}{35} & Greedy   &  28 &  3 & 0.0864 \\
      &        &     & Iterative &  9 & 8 & 0.0864 \\
    \bottomrule
  \end{tabular}
  \caption{Sensor allocations and average relative errors (over the testing set) for each dataset.}
  \label{tab:allocs}
\end{table}

\begin{figure}[!ht]
    \centering
    \includegraphics[width=\linewidth] {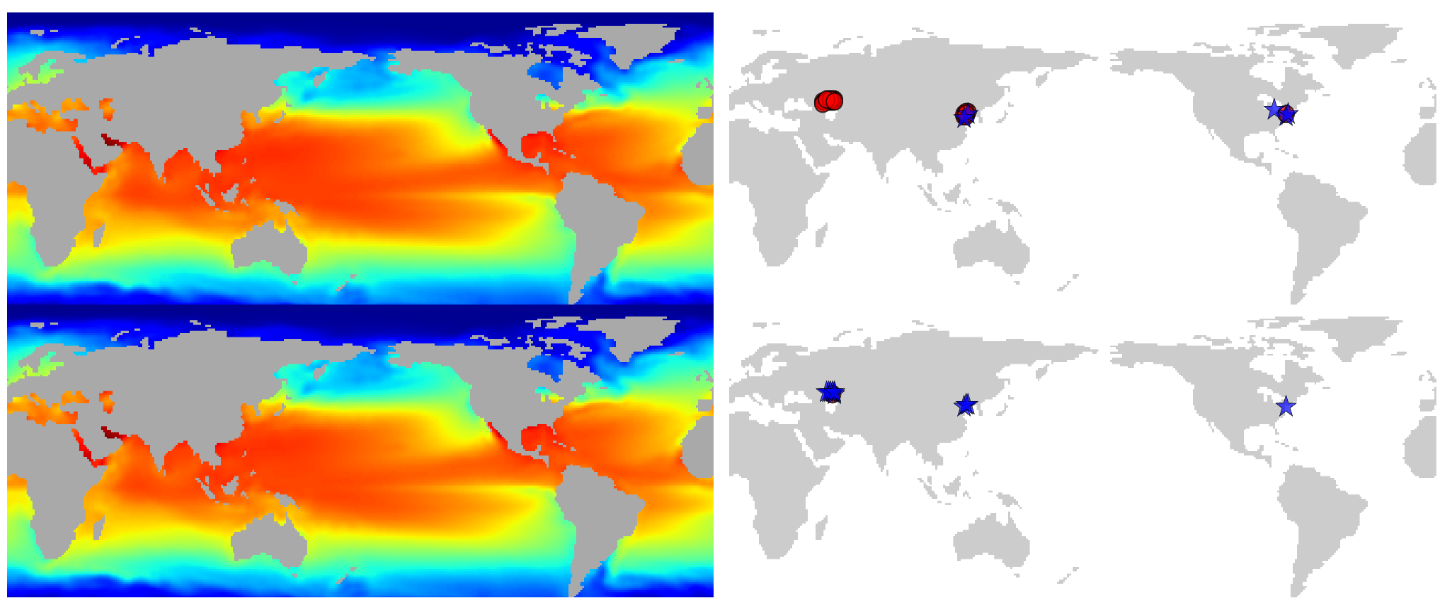}
    \caption{MAP reconstructions of sea surface temperature using sensors selected by the greedy (top) and iterative (bottom) algorithms. Both methods employ the same settings, yielding comparable relative errors of about $11\%$. For the iterative method, the candidate set contains $|\mathcal K| = 11$ sensor allocations, and the algorithm performs a total of $T_\mathrm{obs}=2$ iterations. The left panels show the MAP estimate, while the right panels display the corresponding sensor locations. Cheap sensors are denoted by circles and expensive sensors by stars.} 
    \label{fig:sst-recon-zoomed-vertical}
\end{figure}

\subsection{Reconstructions}
\label{subsec:reconstructions}
In this section, we present reconstructions using sensor configurations selected by the proposed algorithms.
The aim of this is twofold: 
First, they provide a concrete illustration of the state estimation problem described in \Cref{subsec:state}.
Second, although the D-optimality criterion is designed to maximize expected information gain rather than to directly minimize the reconstruction error of the MAP estimate, it is not \emph{a priori} clear that sensor configurations with higher D-optimality necessarily yield more accurate reconstructions. The experiments that follow provide empirical evidence that the proposed algorithms identify sensor locations leading to accurate recovery of the underlying system state.

In these experiments, the total budget is set to $\bud = 1000$,
with sensor costs $\cC = 25$ and $\cE = 96$, and measurement noise standard deviations $\sigmaC = 0.02$ and $\sigmaE = 0.01$ for the cheap and expensive sensors, respectively.  
As shown in \Cref{fig:sst-recon-zoomed-vertical}, for the SST dataset, both the greedy and iterative selection strategies predominantly place sensors in landlocked regions and lakes. Since the columns of $\bPhi$ span a low-dimensional subspace that captures the dominant variability in the training data, the algorithms prioritize locations exhibiting strong local variability, which often occur over inland water bodies.  
\Cref{fig:flow-recon-zoomed} shows that for the flow past a cylinder dataset, the selected sensors cluster near the cylinder, where the flow exhibits the largest spatial gradients. The small relative errors observed in each case confirm that the resulting configurations yield reconstructions that are accurate and nearly optimal within the prescribed budget, providing evidence that, although the presented algorithms may choose significantly different sensor configurations, such as in this experiment, they are effective in choosing sensor configurations that yield accurate reconstructions.
\subsection{Experiments} 
\label{subsec:exp}
We now turn to quantitative experiments on the behaviors of our algorithms as the inputs vary. In \Cref{subsubsec:exp1} we examine how the greedy algorithm's allocation and placement of sensors change as the sensor costs and measurement noise standard deviations vary. Then, in \Cref{subsubsec:exp2}, we examine how our algorithms' allocation and placement of sensors change as the budget increases. Finally, in \Cref{subsubsec: exp3} we compare the D-optimality objective values of sensor configurations chosen by our algorithms against random designs.

\begin{figure}[!ht]
    \centering
    \includegraphics[width=\linewidth]{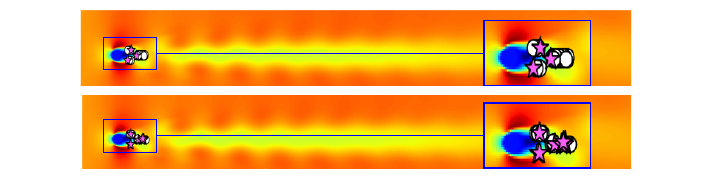}
 \caption{MAP reconstructions of flow past a cylinder obtained using greedy (top) and iterative (bottom) sensor selection. Both approaches achieve comparable relative errors of approximately $9\%$. For the iterative method, the candidate set consists of $|\mathcal K|=11$ sensor allocations, and the algorithm performs a total of $T_\mathrm{obs}=9$ iterations. The zoomed insets highlight sensor placements concentrated near regions of high flow variability.}
      \label{fig:flow-recon-zoomed}
\end{figure}

\subsubsection{Experiment 1: Costs and Measurement Noise Standard Deviations}
\label{subsubsec:exp1}

In this section, we analyze how variations in sensor costs and measurement noise standard deviations influence the greedy algorithm's allocation and placement strategy, while keeping the total budget fixed. 
The goal of this experiment is to determine under what conditions the algorithm favors one sensor fidelity over the other, and to identify the critical regime where it is indifferent between them.

We note that, in our implementation, such indifference results in a preference towards cheap sensors, as our implementation defaults to the lower-cost option when both provide equal marginal information gain.

We consider three regimes determined by the comparison of the cost ratio ${\cC}/{\cE}$ and the noise variance ratio ${\sigmaE^2}/{\sigmaC^2}$: ${\cC}/{\cE} > {\sigmaE^2}/{\sigmaC^2}$, ${\cC}/{\cE} < {\sigmaE^2}/{\sigmaC^2}$, and the critical case ${\cC}/{\cE} = {\sigmaE^2}/{\sigmaC^2}$. The sensor configurations selected by the greedy algorithm for these regimes are shown in \Cref{fig:cost-noise-ratio-gridlwbf}, and the corresponding parameter values and sensor counts are reported in \Cref{tab:cost-noise-ratio}.
When the cost ratio ${\cC}/{\cE}$ is larger than the critical value ${\sigmaE^2}/{\sigmaC^2}$, the algorithm favors expensive sensors due to their higher information gain per unit cost. Conversely, when the cost ratio is smaller, cheap sensors dominate the selection. At the critical ratio, the algorithm is indifferent between the two fidelities; however, in our implementation, it defaults to favoring cheap sensors due to the cost-based tie-breaking rule. A heuristic explanation of these results is given in \Cref{sec:heuristic}.
\begin{figure}[!ht]
    \begin{subfigure}[t]{0.48\textwidth}
        \centering
        \adjincludegraphics[width=\textwidth, height=2cm, trim={0 {.6\height} 0 0}, clip]{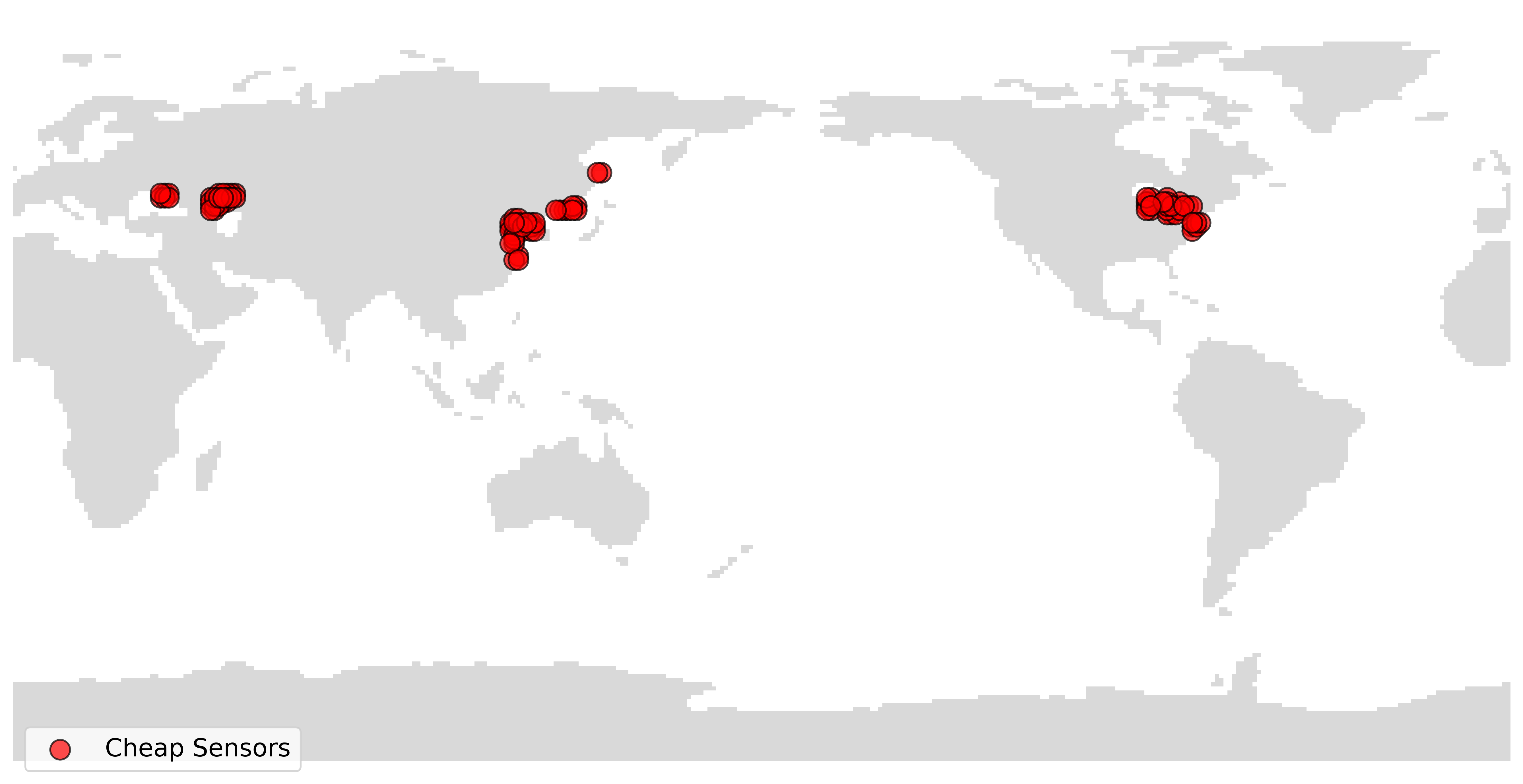}
        \caption{\textbf{Case:} $\displaystyle \frac{\cC}{\cE} = \frac{\sigmaE^2}{\sigmaC^2}$}
        \label{fig:cost_ratio_equal}
    \end{subfigure}
    \hfill
    \begin{subfigure}[t]{0.48\textwidth}
        \centering
        \adjincludegraphics[width=\textwidth, height=2cm, trim={0 {.6\height} 0 0}, clip]
        {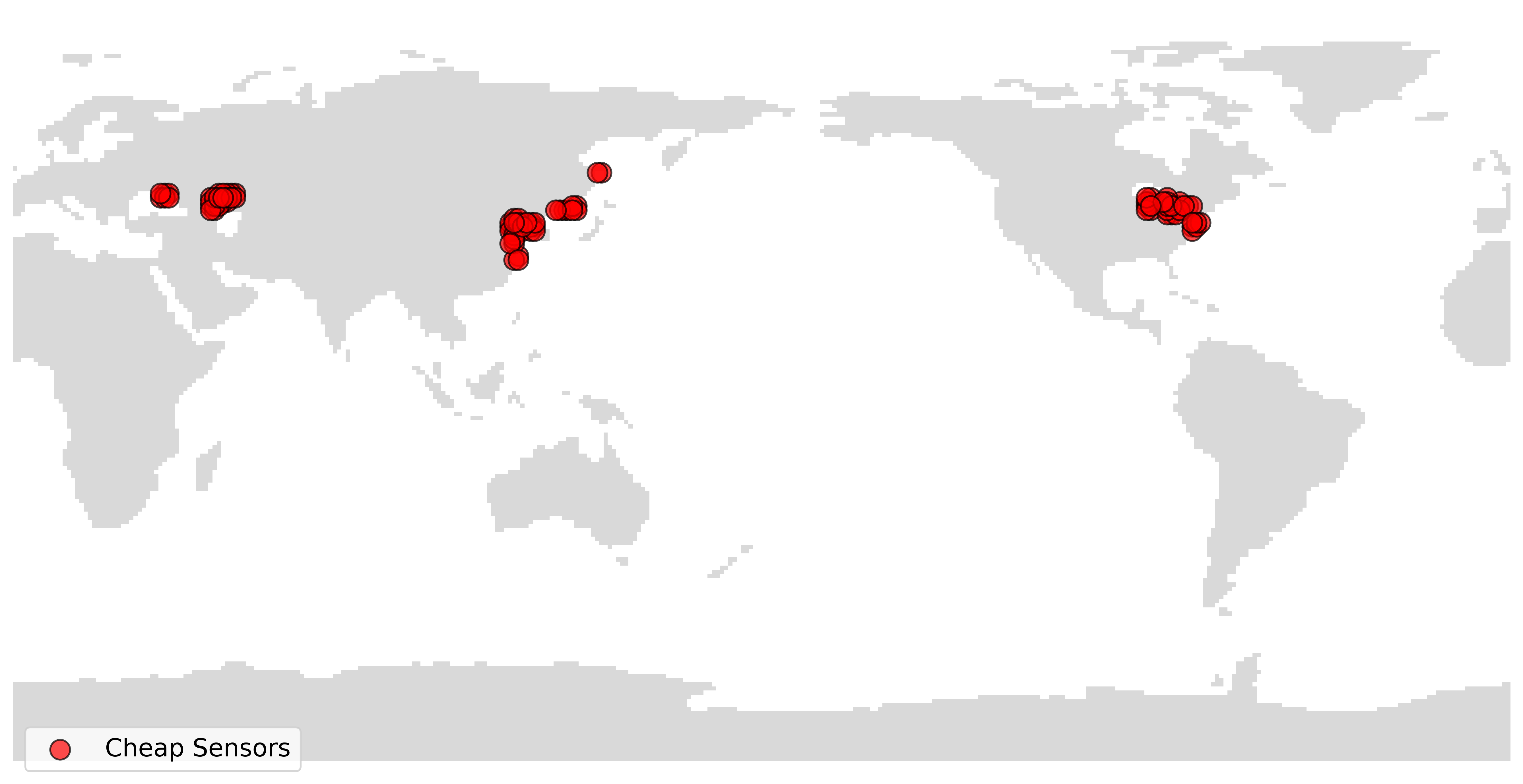}
        \caption{\textbf{Case:} $\displaystyle \frac{\cC}{\cE} < \frac{\sigmaE^2}{\sigmaC^2}$}
        \label{fig:cost_ratio_greater}
    \end{subfigure}

    \begin{subfigure}[t]{0.48\textwidth}
        \centering
        
        \adjincludegraphics[width=\textwidth, height=2cm, trim={0 {.6\height} 0 0}, clip]{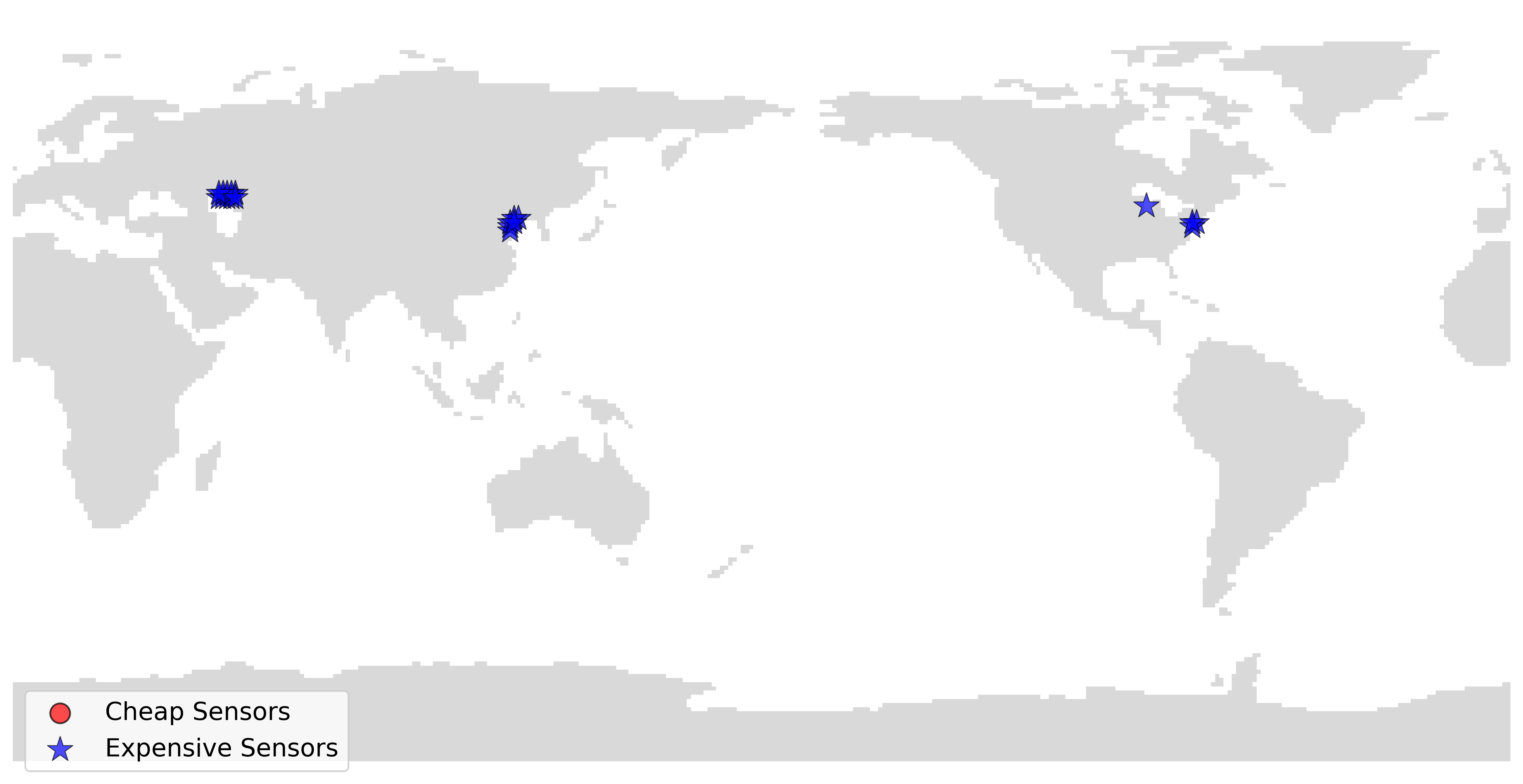}
        \caption{\textbf{Case:} $\displaystyle \frac{\cC}{\cE} > \frac{\sigmaE^2}{\sigmaC^2}$}
        \label{fig:cost_ratio_less}
    \end{subfigure}
    \hfill
    \begin{subfigure}[t]{0.48\textwidth}
        \centering
        \adjincludegraphics[width=\textwidth, height=2cm, trim={0 {.6\height} 0 0}, clip]{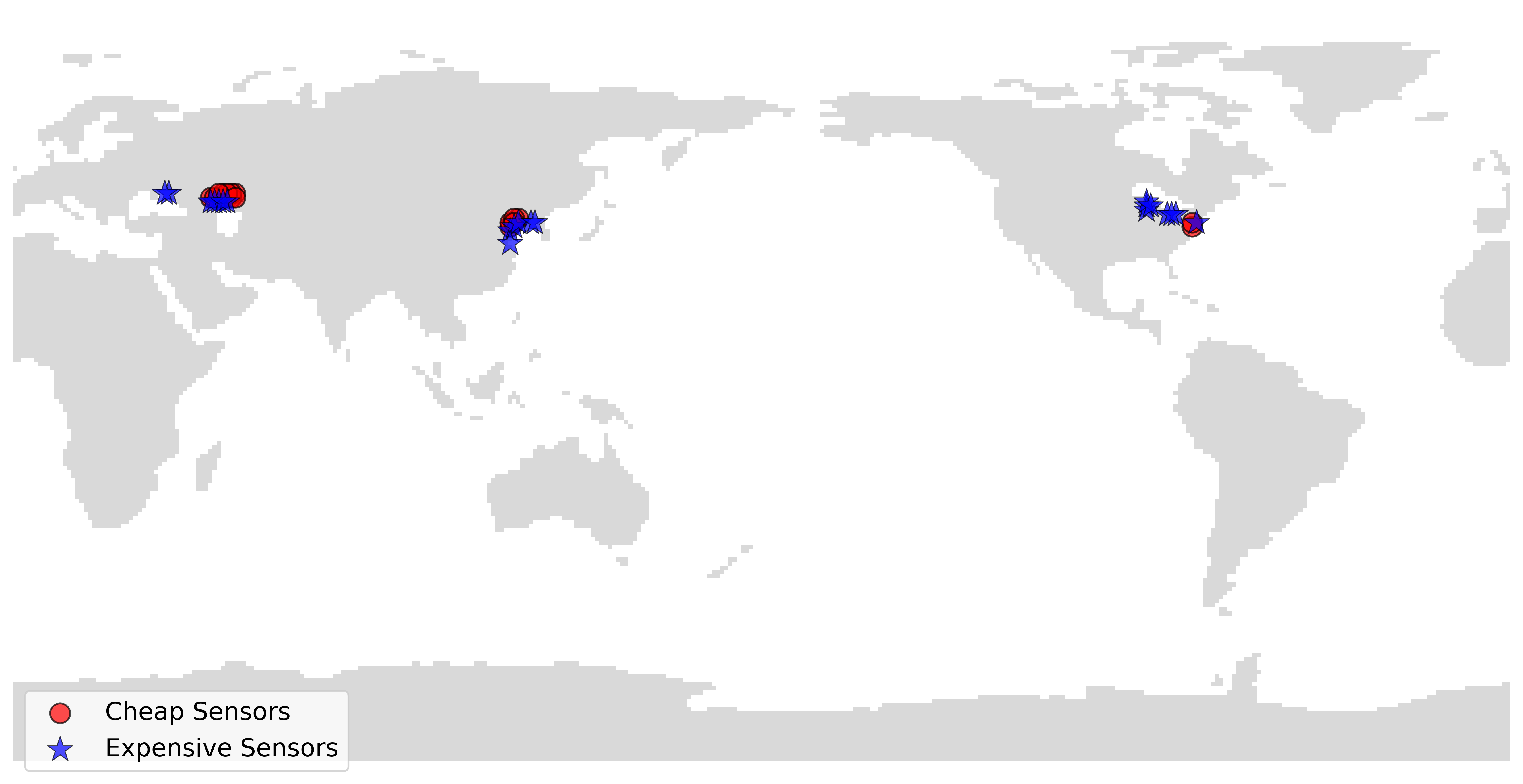}
        \caption{\textbf{Case:} $\displaystyle \frac{\cC}{\cE}  \lessapprox \frac{\sigmaE^2}{\sigmaC^2}$}
        \label{fig:cost_ratio_slightly_less}
    \end{subfigure}

    \caption{Greedy sensor selection results for different costs and measurement noise standard deviations. 
    }
    \label{fig:cost-noise-ratio-gridlwbf}
\end{figure}

\begin{table}[!ht]
    \renewcommand{\arraystretch}{1.3}
    \setlength{\tabcolsep}{8pt}
    \begin{tabular}{c c c c l}
        \toprule
         \textbf{Case} 
        & $\bm{\cC/ \cE}$ & $\bm{\sigmaC/ \sigmaE}$ & $\bm{\kc/\ke}$ & \textbf{Description} \\
        \midrule
         \eqref{fig:cost_ratio_equal}
             & 1/4 & 0.02/0.01 & 100/0 & Critical (balanced) ratio \\

        \eqref{fig:cost_ratio_greater}
            & 1/6 & 0.02/0.01 & 100/0 & Cheap sensors dominate \\

       \eqref{fig:cost_ratio_less}
             & 1/5 & 0.05/0.01 & 0/20 & Expensive sensors dominate \\

        \eqref{fig:cost_ratio_slightly_less}
             & 1/3.85 & 0.02/0.01 & 19/21 & Slightly below critical ratio \\
        \bottomrule
    \end{tabular}
        \centering
    \caption{Summary of cases for different costs and measurement noise standard deviations. The budget is fixed at $\bud =100$.}
    \label{tab:cost-noise-ratio}
\end{table}

\subsubsection{Experiment 2: Sensor Allocation and Increasing Budget} 
\label{subsubsec:exp2}
We explore how the allocation of cheap and expensive sensors changes as the total budget $\bud$ increases. For each test, we fix the costs $\cC = 1$ and $\cE = 5$, and measurement noise standard deviations $\sigmaC = 0.02$ and $\sigmaE = 0.01$, corresponding to a regime where ${\cC}/{\cE} < {\sigmaE^2}/{\sigmaC^2}$, so cheap sensors provide higher information gain per unit cost. As shown in \Cref{fig:greedy-iterative-budget-grid}, the  
greedy algorithm selects all cheap sensors, while the iterative algorithm selects a mix of both fidelities, though it tends to favor cheap sensors. The difference in sensor fidelity becomes more pronounced as the budget increases. For comparison, when ${\cC}/{\cE} > {\sigmaE^2}/{\sigmaC^2}$, both algorithms favor expensive sensors and when ${\cC}/{\cE} = {\sigmaE^2}/{\sigmaC^2}$, greedy favors cheap and iterative favors expensive. For the iterative algorithm, the size of the candidate allocation set and the number of refinement iterations remain modest across all budgets, with the algorithm rapidly converging in all cases. Specifically, for $\bud=50$, the candidate set contains $|\mathcal K |=11$ sensor allocations and the algorithm performs a total of $T_\mathrm{obs}=3$ iterations; for $\bud=100$, $|\mathcal K |=21$ and $T_\mathrm{obs}=5$; and for $\bud=200$, $|\mathcal K |=41$ and $T_\mathrm{obs}=16$.

\begin{figure}[!ht]
  \centering
  \setlength{\tabcolsep}{4pt}
  \renewcommand{\arraystretch}{1.0}
  \begin{tabular}{cc}

    \adjincludegraphics[width=0.48\textwidth,
      trim={{.02\width} {.45\height} {.12\width} 0},clip]
      {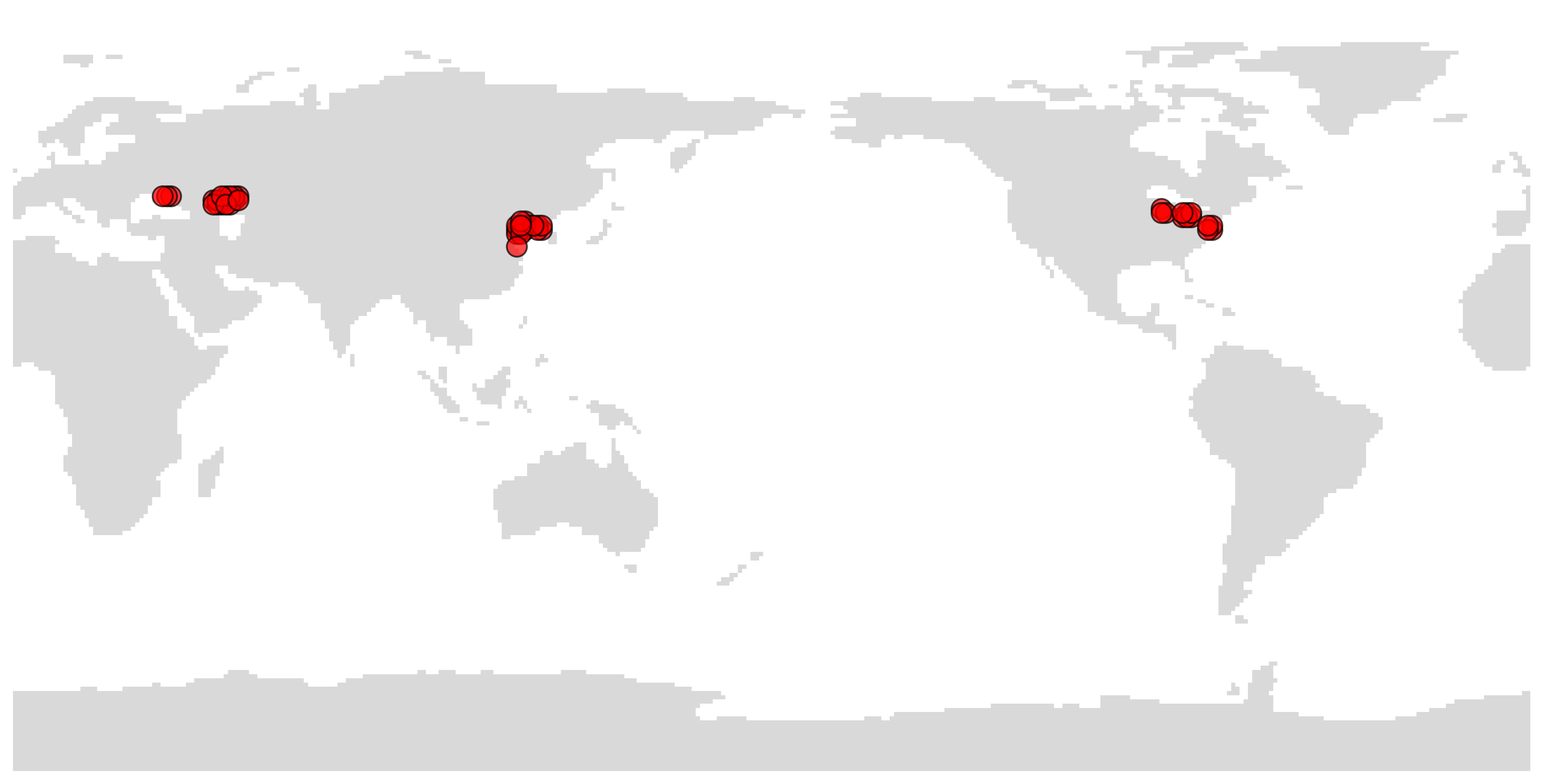} &
    \adjincludegraphics[width=0.48\textwidth,
      trim={{.02\width} {.45\height} {.12\width} 0},clip]
      {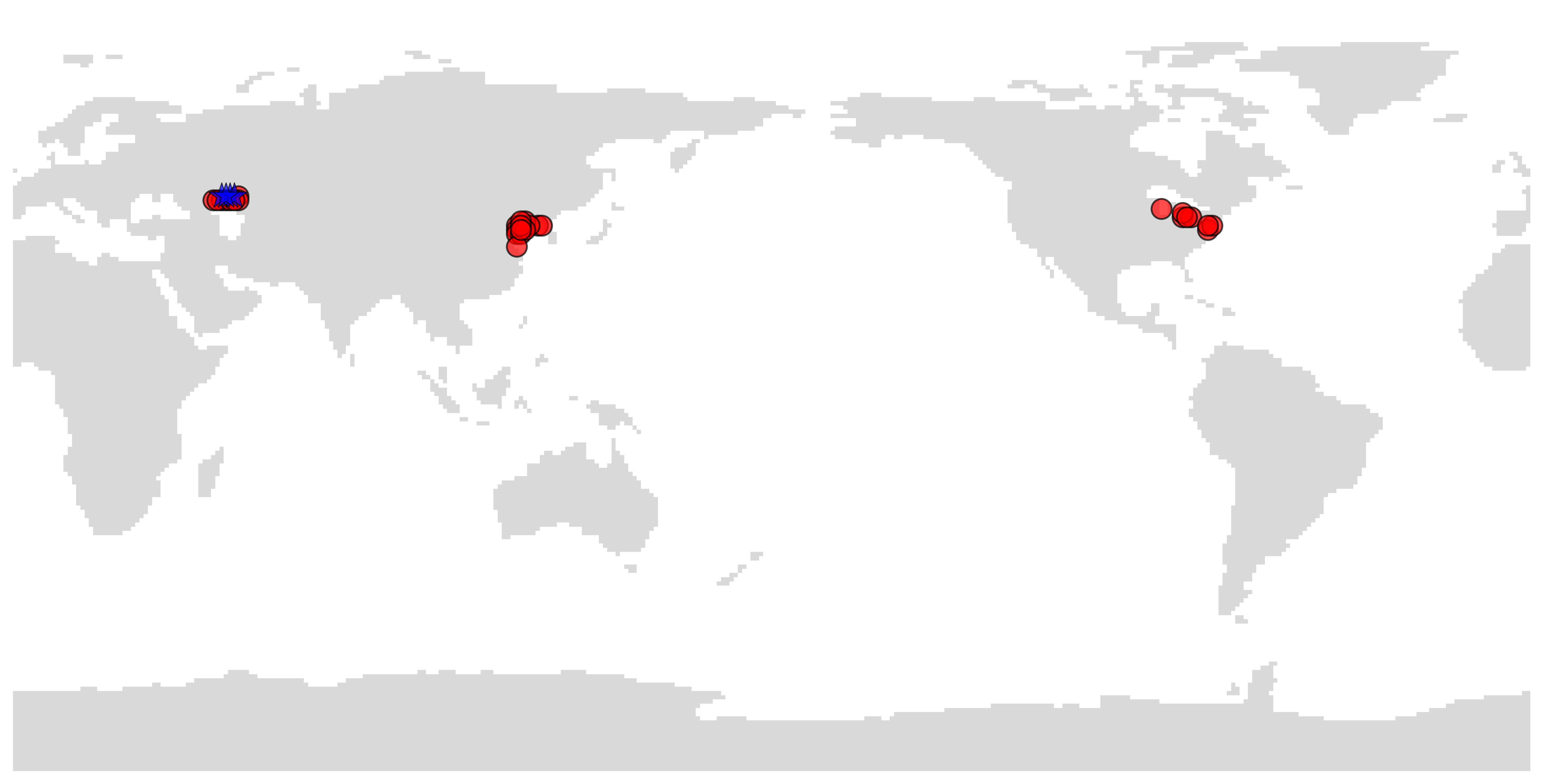} \\
    $ \kc{:}~50,\; \ke{:}~0$  & $ \kc{:}~30,\; \ke{:}~4$ \\[6pt]

    \adjincludegraphics[width=0.48\textwidth,
      trim={{.02\width} {.4\height} {.12\width} 0},clip]
      {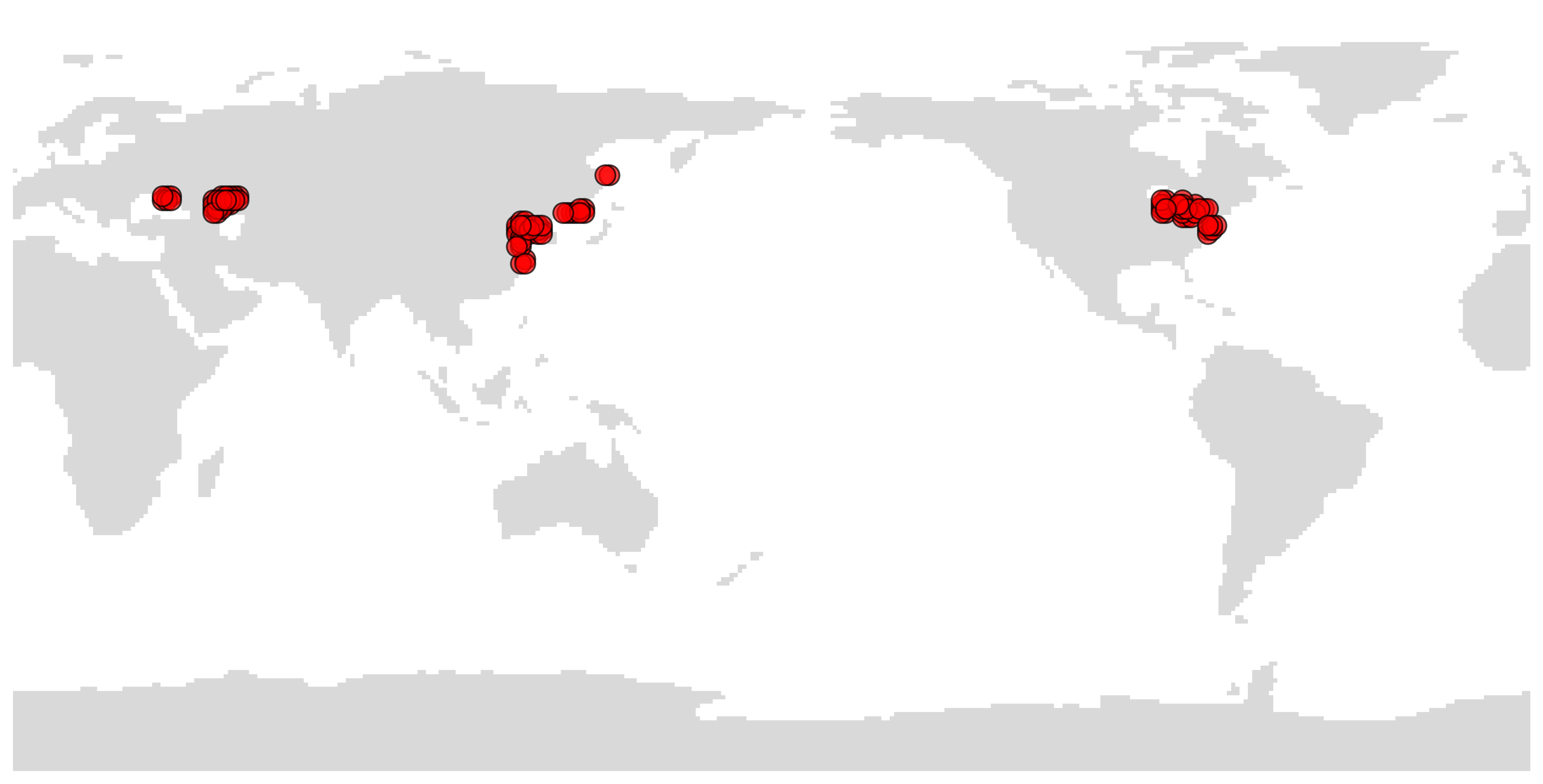} &
    \adjincludegraphics[width=0.48\textwidth,
      trim={{.02\width} {.45\height} {.12\width} 0},clip]
      {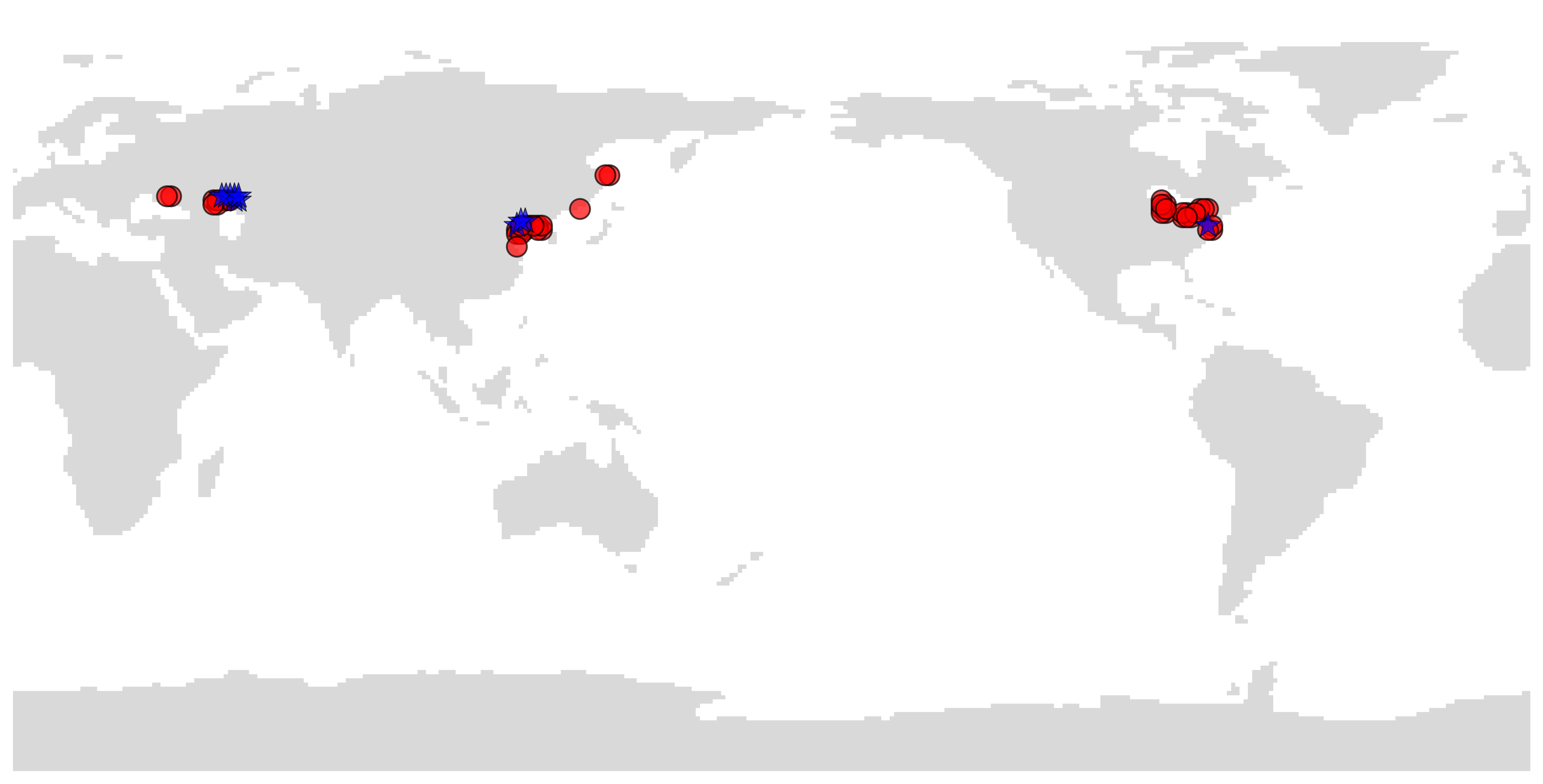} \\
    $ \kc{:}~100,\; \ke{:}~0$& $ \kc{:}~45,\; \ke{:}~11$ \\[6pt]

    \adjincludegraphics[width=0.48\textwidth,
      trim={{.02\width} {.45\height} {.12\width} 0},clip]
      {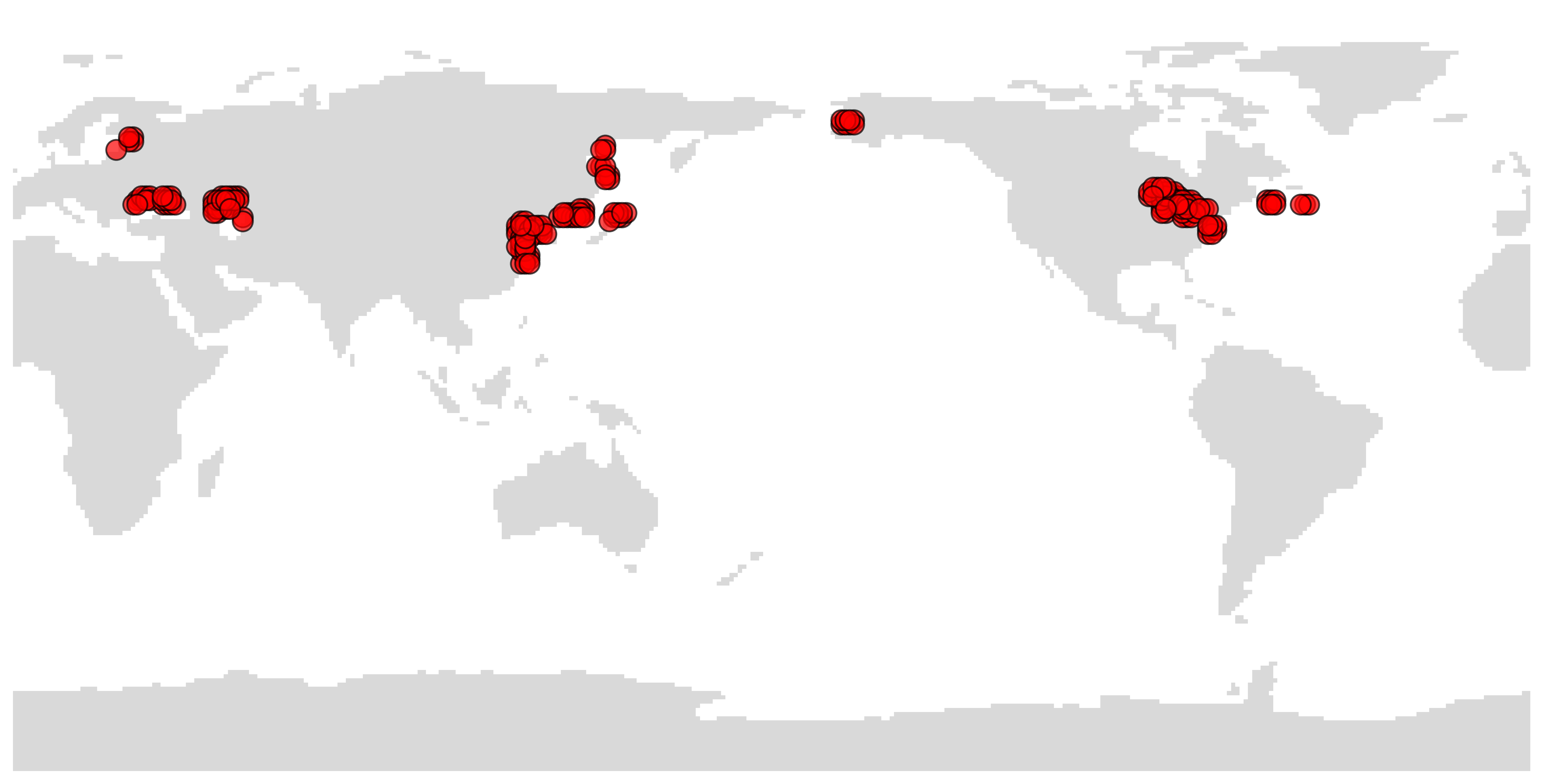} &
    \adjincludegraphics[width=0.48\textwidth,
      trim={{.02\width} {.4\height} {.12\width} 0},clip]
      {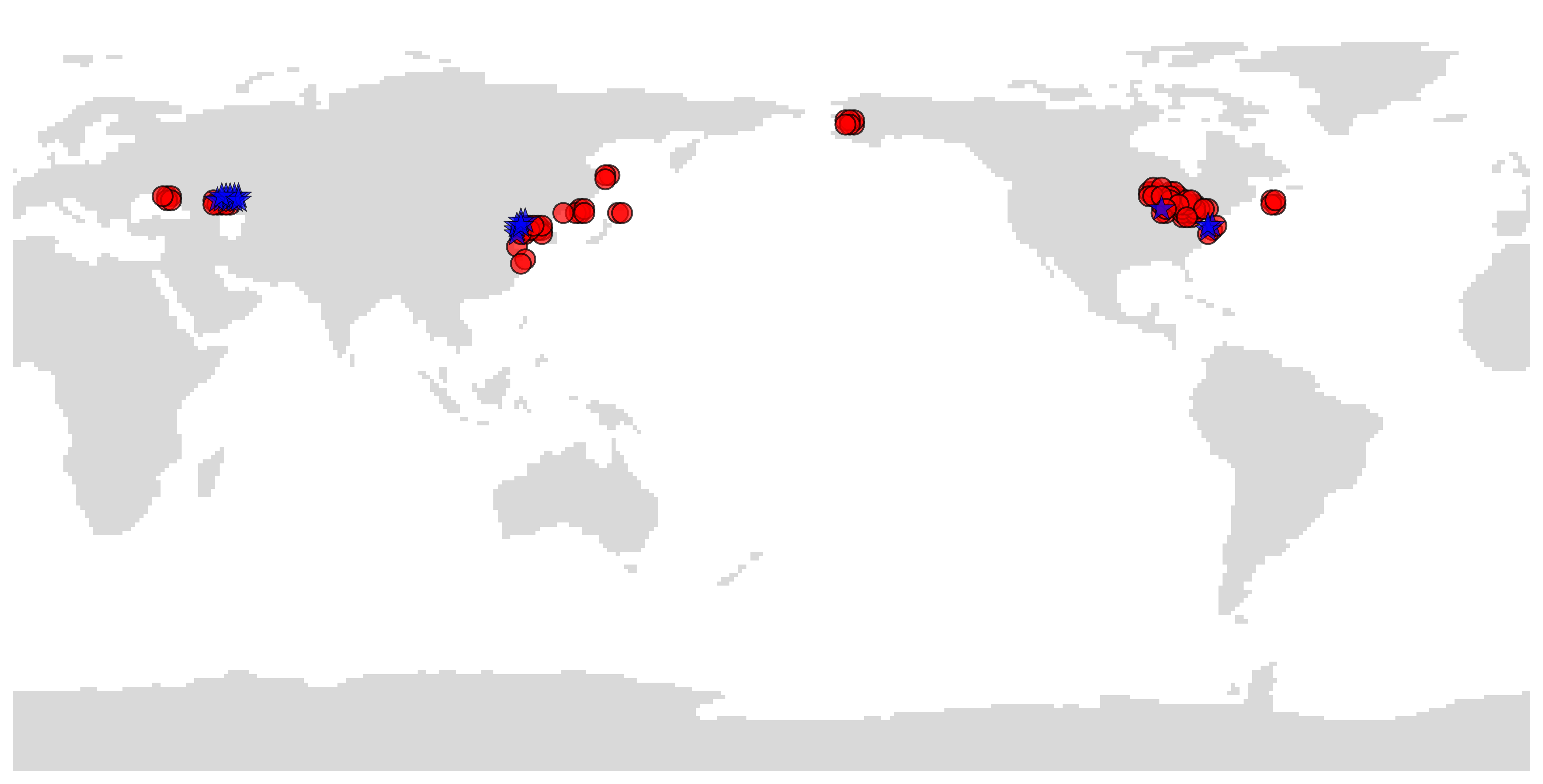} \\
    $ \kc{:}~200,\; \ke{:}~0$ & $ \kc{:}~95,\; \ke{:}~21$

  \end{tabular}

  \caption{Sensor placement results for the greedy (left) and iterative (right) algorithms.
  Rows correspond to increasing  budgets $\bud = 50, 100, 200$ from top to bottom.
  The corresponding $(\kc, \ke)$ values are indicated below each image, with cheap sensors shown in red and expensive sensors shown in blue.}
  \label{fig:greedy-iterative-budget-grid}
\end{figure}

\begin{table}[!ht]
    \vspace{0.5em}
    \centering
    \begin{tabular}{c c c c c c c}
        \toprule
        \textbf{Dataset} 
        & $\bm{\bud}$ 
        & $\bm{\cC/ \cE}$ 
        & $\bm{\sigmaC/ \sigmaE}$ 
        & $\bm{|\mathcal K |}$
        & $\bm{\Phi_D(\text{Greedy})}$ 
        & $\bm{\Phi_D(\text{Iterative})}$\\
        \midrule

        \multirow{2}{*}{SST}
            & 500 & 10/38 & 0.02/0.01 & 14 & 1.5078 & 1.6056  \\
        & 100 & 1/5   & 0.04/0.02 & 21 & 0.8072 & 0.8741\\
        \addlinespace[4pt]\midrule\addlinespace[4pt]

        \multirow{2}{*}{\shortstack{Flow\\past Cylinder}}
            & 500 & 10/38 & 0.02/0.01 & 14 & 12.6884 & 12.9191  \\
        & 100 & 1/5  & 0.04/0.02 & 21 & 8.9895 & 8.9895  \\
        \bottomrule
    \end{tabular}
    \caption{Summary of cases for different D-optimality comparisons.}
    \label{tab:d-opt}
\end{table}

\subsubsection{Experiment 3: Comparison of Algorithms} 
\label{subsubsec: exp3}
For this experiment, we compare the D-optimality objective values of the accelerated greedy algorithm against the iterative selection approach, as well as compare both against random designs as a baseline. To generate random designs, we construct the set $\mathcal K$ of candidate allocations obtained by carrying out Phase~I (\cref{iter_phase_I}) of the iterative selection \Cref{alg:itergreedy}, and for each $(\kc,\ke) \in \mathcal K$, we generate $1000$ random sensor configurations. The D-optimality objective values from these configurations are plotted in histograms. \Cref{tab:d-opt} reports the $\Phi_D$ values achieved by the greedy and iterative algorithms for each dataset under two different budget and noise settings. 

We now report on the number of iterations for the Iterative algorithm (\Cref{alg:itergreedy}). For the SST dataset, $T_\mathrm{obs}=7$ for $\bud=100$ and $T_\mathrm{obs}=1$ for $\bud=500$. For the flow past a cylinder dataset, $T_\mathrm{obs}=21$ and $T_\mathrm{obs}=14$ for $\bud=100$ and $\bud=500$, respectively. These results provide empirical evidence that, under the D-optimality criterion, the proposed iterative selection approach performs better than the greedy method, and that both the iterative and greedy approaches perform considerably better than random designs.
\Cref{fig:alg-comparison-2x2} displays our results for the SST and flow past a cylinder dataset corresponding to $\bud=500$.

\begin{figure}[!ht]
    \centering
    \begin{subfigure}[b]{0.45\textwidth}
        \centering
        \includegraphics[height=0.165\textheight]{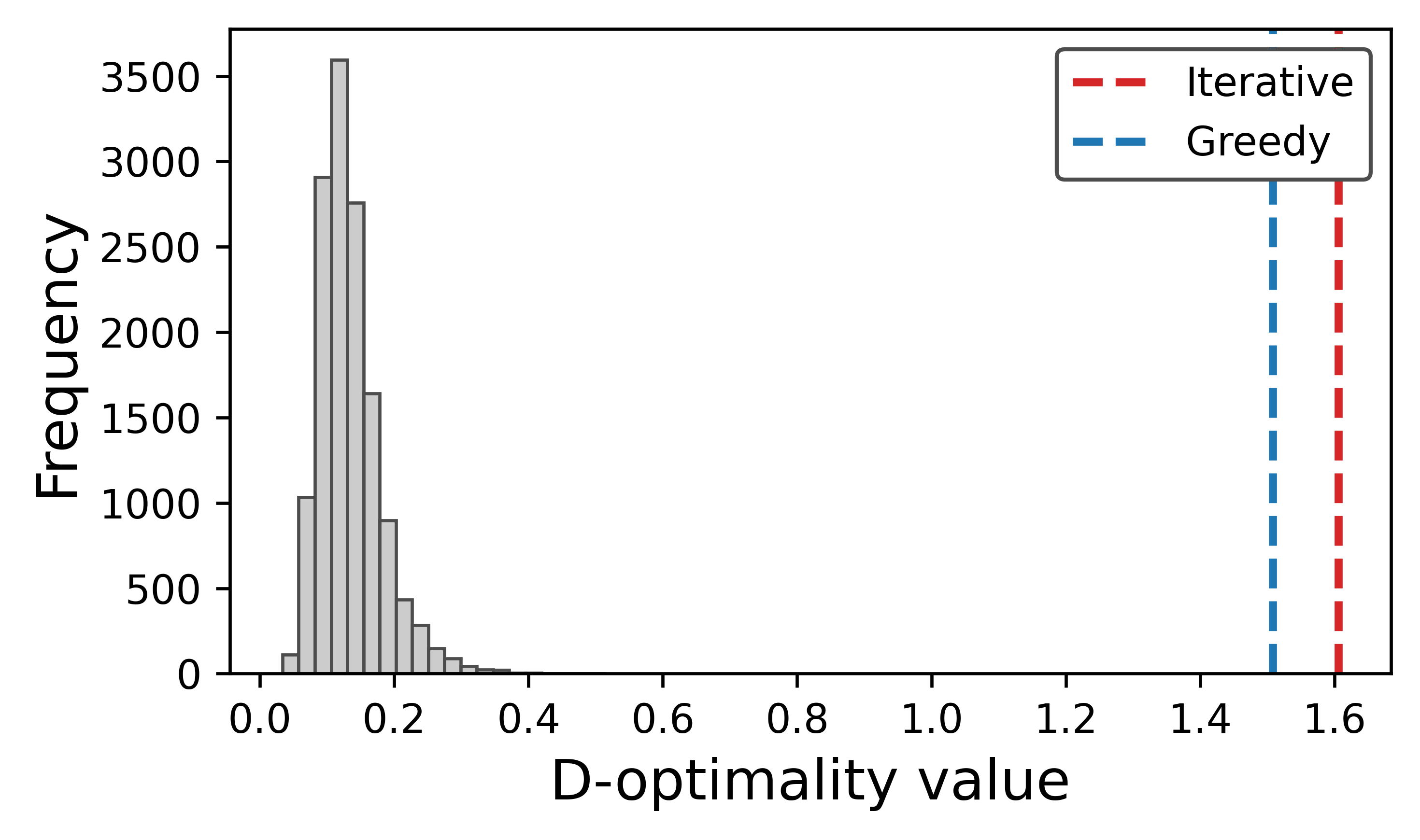}
        \caption{SST dataset}
        \label{fig:figure5a}
    \end{subfigure}
    \hfill
    \begin{subfigure}[b]{0.45\textwidth}
        \centering
        \includegraphics[height=0.165\textheight]{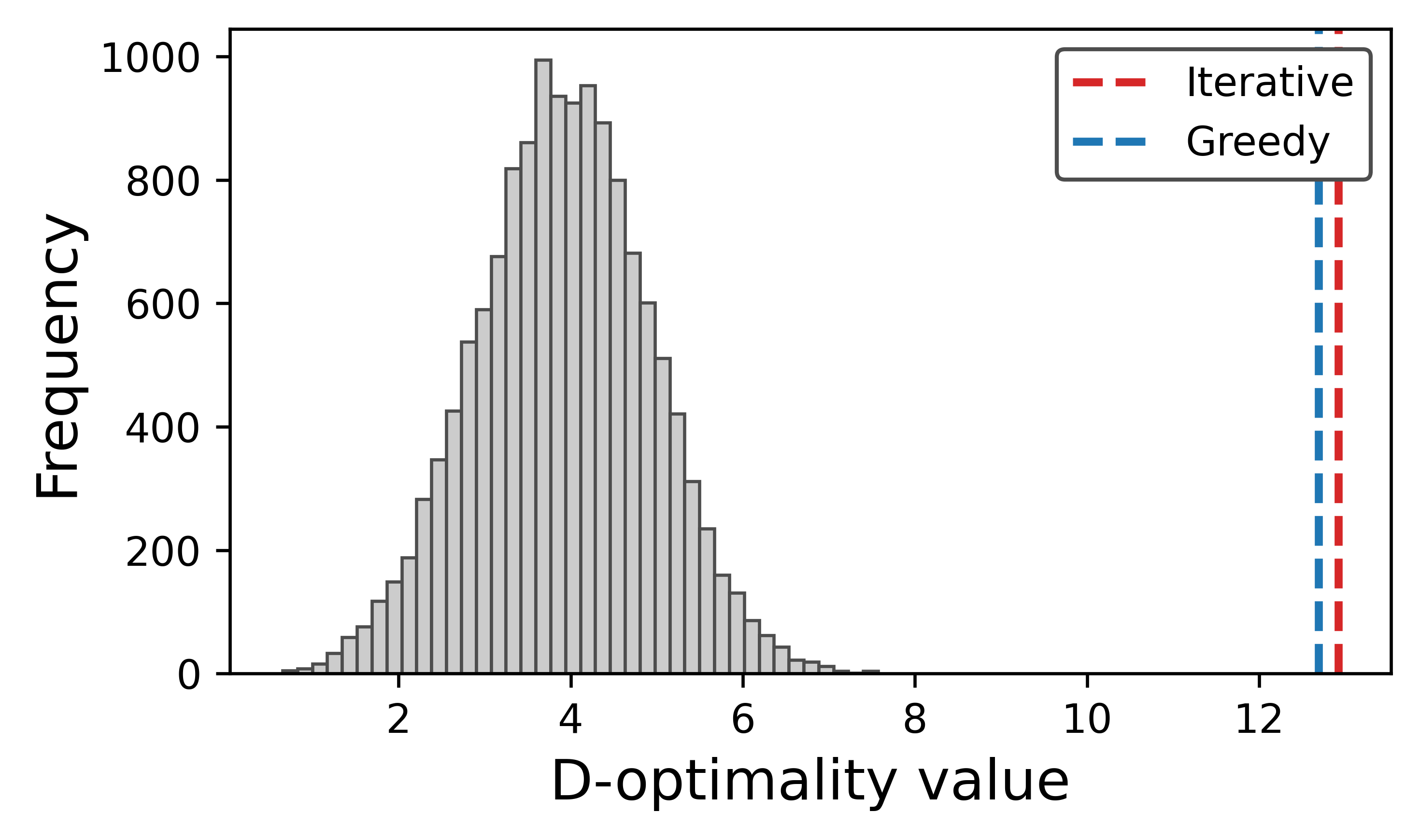}
        \caption{Flow past cylinder dataset}
        \label{fig:figure5b}
    \end{subfigure}
    \caption{D-optimality comparison of random, greedy, and iterative sensor selection at $\bud=500$. Both greedy and iterative selection outperform random selection.}
    \label{fig:alg-comparison-2x2}
\end{figure}

\section{Conclusion}
\label{sec:conclusion}
Optimal data collection has applications in many areas of science and engineering, including medicine, geophysics, and fluid mechanics. We considered this task via sensor placement for the Bayesian inverse problem of estimating the full state of a system from partial and noisy sensor measurements. Extending this sparse sensor selection problem to account for different costs and fidelities, we introduced a multifidelity, cost-constrained formulation of D-optimal design and proposed two computational strategies: a greedy algorithm and an iterative refinement method.

Numerical results show that both algorithms effectively optimize the D-optimality criterion, with the iterative approach consistently performing better than the greedy strategy.
While accurate state reconstruction is not the primary goal of D-optimal design, the greedy and iterative algorithms produce accurate state reconstructions, with the iterative approach performing slightly better.
Across a range of sensor costs and noise levels, the greedy algorithm exhibits intuitive selection behavior by favoring sensors with lower cost-to-noise ratios, whereas the iterative method more aggressively exploits fidelity to enhance information gain.
As the budget increases, the differing selection mechanisms of the two algorithms become increasingly apparent across cost and noise regimes.

Looking ahead, these results motivate the application of the proposed methods to a broader class of state estimation and inverse problems. In addition, the submodularity of the objective in~\eqref{prop:monotone} can be exploited to further reduce the number of evaluations in the greedy search.
Since each candidate’s marginal gain decreases as the selected set grows, one can apply an \emph{accelerated} or \emph{lazy greedy} strategy~\cite{Minoux,Leskovec}, which maintains a priority queue of upper bounds and updates them only when necessary.
This idea, previously explored in D-optimal design for the single-fidelity case in~\cite{Robertazzi}, preserves the theoretical guarantees of the standard greedy algorithm while substantially lowering computational cost.
We aim to explore this in a subsequent work, combining it with the Sherman-Morrison updates to further accelerate large-scale sensor selection. Another natural extension would be to examine our algorithms on an arbitrary number of sensor fidelity levels and costs. An additional interesting direction of future work would be to examine this proposed framework under a different optimality criterion, such as A-optimality, which focus on different uncertainty reductions and could lead to different placement of sensors.

\appendix
\section{Implementation of the Iterative Algorithm}
\label{sec:appendix_algo}

In this section, we provide the pseudocode for the proposed method. 
Algorithm \ref{alg:itergreedy} outlines the iterative greedy refinement 
strategy.

\begin{algorithm}[!ht]
\caption{Multifidelity Iterative D-Optimal Sensor Selection}
\label{alg:itergreedy}
\begin{algorithmic}[1]
\item[\textbf{Input:}]
Sensor costs $c_{\ch} < c_{\exp}$; measurement noise standard deviations $\sigma_{\ch} > \sigma_{\exp}$;
budget $\bud$; prior covariance $\bSigma_{\mathrm{pr}}$;
basis $\bPhi \in \mathbb{R}^{N \times \ell}$;
maximum iterations $T$.

\item[\textbf{Output:}]
Sensor selection sets (pair) $[\Sc^\star,\Se^\star]$.
\vspace{2pt}

\item[\textbf{Phase I: Candidate Budget Allocations} (see \Cref{iter_phase_I})]
\STATE Initialize $\mathcal{K} \gets \emptyset$
\PARFOR{$\ke = 0$ \textbf{to} $\lfloor \bud / c_{\exp} \rfloor$}
    \STATE $\kc \gets \left\lfloor (\bud - c_{\exp}\ke)/c_{\ch} \right\rfloor$
    \IF{$(\kc,\ke)$ is admissible (cf.~\Cref{prop:monotone,lower_variance_sensor})}
        \STATE $\mathcal{K} \gets \mathcal{K} \cup \{(\kc,\ke)\}$
    \ENDIF
\ENDPARFOR

\vspace{4pt}
\item[\textbf{Phase II: Iterative Sensor Placement} (see \Cref{iter_phase_II})]
\STATE Initialize design set $\mathcal{S}_{\mathcal{K}} \gets \emptyset$
\PARFOR{$(\kc,\ke) \in \mathcal{K}$}
    \STATE Initialize $\Sc \gets \emptyset$
    \STATE Initialize $\Se \gets \textsc{GreedySelect}(\exp,\ke,\Sc)$
    \FOR{$t = 1$ \textbf{to} $T$}
        \STATE $\Sc^{+} \gets \textsc{GreedySelect}(\ch,\kc,\Se)$
        \IF{$\Phi_D([\Sc^{+},\Se]) \le \Phi_D([\Sc,\Se])$}
            \STATE \textbf{break}
        \ENDIF
        \STATE $\Sc \gets \Sc^{+}$

        \STATE $\Se^{+} \gets \textsc{GreedySelect}(\exp,\ke,\Sc)$
        \IF{$\Phi_D([\Sc,\Se^{+}]) \le \Phi_D([\Sc,\Se])$}
            \STATE \textbf{break}
        \ENDIF
        \STATE $\Se \gets \Se^{+}$
    \ENDFOR
    \STATE $\mathcal{S}_{\mathcal{K}} \gets \mathcal{S}_{\mathcal{K}} \cup \{(\Sc,\Se)\}$
\ENDPARFOR

\vspace{4pt}
\STATE $\displaystyle(\Sc^\star,\Se^\star)
\gets \argmax_{(\Sc,\Se)\in \mathcal{S}_{\mathcal{K}}}
\Phi_D([\Sc,\Se])$
\RETURN $[\Sc^\star,\Se^\star]$
\end{algorithmic}
\end{algorithm}

\section{Relation between sensor fidelities and greedy selection}\label{sec:heuristic}

To explain the empirical preference of the greedy algorithm for one sensor fidelity over another, we derive a heuristic condition relating sensor costs and measurement noise levels. The key idea is to examine how the cost-normalized marginal information gain balances fidelity and noise.
We begin by writing the greedy selection rule in its cost-normalized form. Omitting the iteration counter for clarity, at each step, the algorithm selects the fidelity-location pair
\[(j^*,i^*) \in \argmax_{(j,i) \in J \times I} \left\{c_j^{-1}\left(\Phi_D(S^{(j,i)})-\Phi_D(S)\right)\right\}\]
where $S = [\Sc,\Se]$ is the current selection list, $J$ is the set of fidelities for which there is enough remaining budget, and $I = \{1,2,\hdots,M\} \setminus (\Sc \cup\Se)$ is the set of locations where there is no sensor. Notice that this is equivalent to first setting 
\begin{align*}
 i^*_\ch \in \argmax_{i\in I} \left\{\Phi_D(S^{(\ch,i)})\right\} \quad \mbox{and} \quad i^*_\exp \in \argmax_{i\in I} \left\{\Phi_D(S^{(\exp,i)})\right\},
\end{align*}
then setting
\begin{align*}
j^* \in \argmax_{j\in J} \left\{c_j^{-1}\left(\Phi_D(S^{(j,i_j^*)})-\Phi_D(S)\right) \right\}
\end{align*}
and $i^* \gets i^*_{j^*}$. That is, it is equivalent to compute the best location $i_\ch^*$ for a cheap sensor and the best location $i_\exp^*$ for an expensive one, then choose the better action between placing a cheap sensor at location $i_\ch^*$ or placing an expensive sensor at location $i_\exp^*$. 
Notice that, by the definition of $i_\ch^*$ and $i_\exp^*$, we must have
\[\Phi_D\left(S^{(\ch,i_\exp^*)}\right)  \leq \Phi_D\left(S^{(\ch,i_\ch^*)}\right) \quad \mbox{and} \quad \Phi_D\left(S^{(\exp,i_\ch^*)}\right)  \leq \Phi_D\left(S^{(\exp,i_\exp^*)}\right).\]
As noted in the proof of \Cref{lower_variance_sensor}, by the definition of $\bAc$ and $\bAe$ there exist some vectors $\bfac,\bfae$ such that $[\bA_{j}]_{:i_\ch^*} = \sigma_j^{-1} \bfac$ and $[\bA_{j}]_{:i_\exp^*} = \sigma_j^{-1} \bfae$ for each $j \in \{\ch,\exp\}$. 
Taking the matrix $\bB = \bB(S)$ from \Cref{prop:I+AAT}, applying \Cref{lemma:matrix-det-lemma} to these yields
\begin{align*}
    \log\left(1 + \sigmaC^{-2}\bfae\t \bB^{-1} \bfae \right) &\leq \log\left(1 + \sigmaC^{-2}\bfac\t \bB^{-1} \bfac \right), \\
    \log\left(1 + \sigmaE^{-2}\bfac\t \bB^{-1} \bfac \right) &\leq \log\left(1 + \sigmaE^{-2}\bfae\t \bB^{-1} \bfae \right).
\end{align*}
Together, these imply $\bfae\t \bB^{-1} \bfae \leq \bfac\t \bB^{-1} \bfac \leq \bfae\t \bB^{-1} \bfae$, so we must have
\[\bfac\t \bB^{-1} \bfac = \bfae\t \bB^{-1} \bfae.\]
Given this fact, we can now derive the aforementioned conditions. The greedy algorithm is indifferent between placing a cheap or expensive sensor when the marginal improvement from these two options is equal, i.e.,
\[\cC^{-1} (\Phi_D(S^{(\ch,i_\ch^*)}) - \Phi_D(S)) = \cE^{-1} (\Phi_D(S^{(\exp,i_\exp^*)}) - \Phi_D(S)).\]
That is, applying \Cref{prop:I+AAT} and \Cref{lemma:matrix-det-lemma}, the greedy algorithm considers it equally good to place a cheap sensor or an expensive sensor when
\[\cC^{-1} \log(1+\sigmaC^{-2} \bfac\t \bB^{-1} \bfac) = \cE^{-1} \log(1+\sigmaE^{-2} \bfae\t \bB^{-1} \bfae).\]
As we found above that $\bfac\t \bB^{-1} \bfac = \bfae\t \bB^{-1} \bfae$, this becomes
\[\cC^{-1} \log(1+\sigmaC^{-2} \bfac\t \bB^{-1} \bfac) = \cE^{-1} \log(1+\sigmaE^{-2} \bfac\t \bB^{-1} \bfac).\]
Whether or not this equality holds depends on the value of $\bfac\t \bB^{-1} \bfac$, which changes at each iteration, and thus one cannot find suitable conditions on the costs and measurement noise standard deviations such that the above necessarily holds at every iteration. 

However, this difficulty can be avoided by using a first-order Taylor approximation of the function 
\(x \mapsto \log(1+x)\) centered at \(x=0\). Since \(\log(1+x) = x + \bigO{x^2}\) as $x \to 0$, we have
\[c_j^{-1} \log(1+\sigma_j^{-2} \bfac\t \bB^{-1} \bfac) \approx c_j^{-1}\sigma_j^{-2} \bfac\t \bB^{-1} \bfac\]
for $j \in \{\ch,\exp\}$. Thus, the greedy algorithm is approximately indifferent between cheap and expensive sensors when 
\[\cC^{-1}\sigmaC^{-2} \bfac\t \bB^{-1} \bfac = \cE^{-1}\sigmaE^{-2} \bfac\t \bB^{-1} \bfac,\]
which is equivalent to
\[\frac{\cC}{\cE} = \frac{\sigmaE^2}{\sigmaC^2}.\]
Replacing this equality by a strict inequality yields an approximate preference for one fidelity over the other; namely, the greedy algorithm prefers expensive sensors when ${\cC}/{\cE} > {\sigmaE^2}/{\sigmaC^2}$ and cheap sensors when ${\cC}/{\cE} < {\sigmaE^2}/{\sigmaC^2}$.

\section{Extension of iterative algorithm to an arbitrary number of fidelities}
\label{sec:appendix_extension}
We extend the iterative algorithm from \Cref{sec:iterative} to the case in which one has an arbitrary number of sensor fidelities, labeled by $\{1,2,\hdots,f\}$, with associated measurement noise standard deviations $\{\sigma_1,\sigma_2,\hdots,\sigma_f\}$ and costs $\{c_1,c_2,\hdots,c_f\}$, where $\sigma_1 > \sigma_2 > \cdots > \sigma_f$ and $c_1 < c_2 < \cdots < c_f$, and one aims to find a sensor configuration $S = [S_1,S_2,\hdots,S_f]$ with maximal D-optimality value subject to the constraint that $\sum_{j=1}^f c_j k_j \leq \bud$, where $k_j = |S_j|$.

In Phase 1, construct the set of candidate allocations by considering each fixed feasible value of $(k_2,k_3,\hdots,k_f)$, then considering the maximal value of $k_1$ subject to this $(k_2,k_3,\hdots,k_f)$; if the resulting allocation $(k_1,k_2,\hdots,k_f)$ does not leave enough leftover budget to replace a sensor by a lower-variance alternative, add it to the set of candidates. That is, initialize $\mathcal K \gets \emptyset$, then, for each tuple of nonnegative integers $(k_2,k_3,\hdots,k_f)$ such that $\sum_{j=2}^f c_j k_j \le \bud$ (which can be computed by a nested loop over the feasible values of $k_j$ for each $j \in \{f, f-1, \hdots, 2\}$), consider the maximal nonnegative integer $k_1$ such that $\sum_{j=1}^f c_j k_j \le \bud$, and check that $(k_1,k_2,\hdots,k_f)$ satisfies that for each $i \in \{1,2,\hdots,f-1\}$ one has either $k_i = 0$ or $\sum_{j=1}^f c_j k_j > \bud - (c_{i+1} - c_i)$; if this condition is satisfied, then add $(k_1,k_2,\hdots,k_f)$ to $\mathcal K$.

Then, in Phase 2, for each $(k_1,k_2,\hdots,k_f) \in \mathcal K$, carry out the iterative greedy optimization in the same fashion as in \Cref{iter_phase_II}, proceeding through the different fidelities in decreasing order of their cost, terminating the iterations after a maximum number of iterations or if the objective function decreases. That is, initialize $S_j^{(0)} \gets \emptyset$ for each $j \in \{1,2,\hdots,f\}$. Then, for $t \geq 1$, for $j = f, f-1, f-2, \hdots, 1$, obtain $S_j^{(t)}$ by greedily maximizing the value of the function \[S_j \mapsto \Phi_D\left(\left[S_1^{(t-1)}, S_2^{(t-1)}, \hdots, S_{j-1}^{(t-1)}, S_j, S_{j+1}^{(t)}, S_{j+2}^{(t)}, \hdots, S_f^{(t)}\right]\right)\]
subject to the constraints that $|S_j|=k_j$ and $S_j \cap S_i = \emptyset$ for each $i \neq j$. For $t \geq 1$ and $j \in \{1,2,\hdots,f\}$, terminate after obtaining $S_j^{(t)}$ if either the maximum number $T$ of iterations has been reached or if the objective function decreased when $S_j^{(t-1)}$ was replaced by $S_j^{(t)}$.

To analyze the computational cost of this process, observe that the number of candidate allocations in $\mathcal K$ is upper bounded by $\prod_{j=2}^f (1+\bud/c_j)$, so the total cost of Phase 1 is $\bigO{\frac{b^{f-1}}{\prod_{j=2}^f c_j}}$ flops. In Phase 2, the total cost for processing a single candidate $(k_1,k_2,\hdots,k_f)$ is $\bigO{\left(\sum_{j=1}^f k_j\right) TM\ell}$ flops, and since all admissible allocations satisfy $\sum_{j=1}^f k_j \leq \bud/c_1$, the cost per candidate is $\bigO{\left(\bud/c_1\right) TM\ell}$. Since there are $\bigO{\frac{b^{f-1}}{\prod_{j=2}^f c_j}}$ candidates, the total cost of Phase 2 is $\bigO{\frac{b^{f}}{\prod_{j=1}^f c_j}TM\ell}$ flops. This term dominates the cost of Phase 1, so the total cost of the extension of this algorithm to the $f$-fidelity case is $\bigO{\frac{b^{f}}{\prod_{j=1}^f c_j}TM\ell}$ flops.

\section*{Acknowledgments}
This work was supported in part by the National Science Foundation under Awards
DMS-2349611 and DMS-2411198, and by the U.S. Department of Energy, Of{}fice of Science, Advanced Scientific Computing Research program under Awards DE-SC0023188 and DE-SC0026310.


\medskip
Received xxxx 20xx; revised xxxx 20xx; early access xxxx 20xx.
\medskip


\begin{thebibliography}{10}

\bibitem{alexanderian2023briefnotebayesiandoptimality}
{\sc A.~Alexanderian}, {\em {A brief note on the Bayesian D-optimality criterion}}, 2023, \url{https://arxiv.org/abs/2212.11466}, \url{https://arxiv.org/abs/2212.11466}.

\bibitem{Attia_2022}
{\sc A.~Attia, S.~Leyffer, and T.~S. Munson}, {\em {Stochastic Learning Approach for Binary Optimization: Application to {B}ayesian Optimal Design of Experiments}}, SIAM Journal on Scientific Computing, 44 (2022), p.~B395–B427, \url{https://doi.org/10.1137/21m1404363}, \url{http://dx.doi.org/10.1137/21M1404363}.

\bibitem{mri_imaging}
{\sc T.~Bakker, H.~van Hoof, and M.~Welling}, {\em {Experimental design for MRI by greedy policy search}}, in Proceedings of the 34th International Conference on Neural Information Processing Systems, NIPS '20, Red Hook, NY, USA, 2020, Curran Associates Inc.

\bibitem{fluid2}
{\sc A.~Barklage, M.~Stradtner, and P.~Bekemeyer}, {\em {Sensor placement for optimal aerodynamic data fusion}}, Aerospace Science and Technology, 155 (2024), p.~109598, \url{https://doi.org/https://doi.org/10.1016/j.ast.2024.109598}, \url{https://www.sciencedirect.com/science/article/pii/S1270963824007272}.

\bibitem{evaluation_metrics}
{\sc S.~Chakkor, E.~Cheikh, B.~Mostafa, and A.~Hajraoui}, {\em {Efficiency Evaluation Metrics for Wireless Intelligent Sensors Applications}}, International Journal of Intelligent Systems and Application, 6 (2014), pp.~1--10, \url{https://doi.org/10.5815/ijisa.2014.10.01}.

\bibitem{9152984}
{\sc E.~Clark, S.~L. Brunton, and J.~N. Kutz}, {\em {Multi-Fidelity Sensor Selection: Greedy Algorithms to Place Cheap and Expensive Sensors With Cost Constraints}}, IEEE Sensors Journal, 21 (2021), pp.~600--611, \url{https://doi.org/10.1109/JSEN.2020.3013094}.

\bibitem{eswar2025bayesiandoptimalexperimentaldesigns}
{\sc S.~Eswar, V.~Rao, and A.~K. Saibaba}, {\em {Bayesian {D}-Optimal Experimental Designs via Column Subset Selection}}, 2025, \url{https://arxiv.org/abs/2402.16000}, \url{https://arxiv.org/abs/2402.16000}.

\bibitem{tsunami}
{\sc A.~Ferrolino, J.~E. Lope, and R.~Mendoza}, {\em {Optimal Location of Sensors for Early Detection of Tsunami Waves}}, Springer International Publishing, 06 2020, pp.~562--575, \url{https://doi.org/10.1007/978-3-030-50417-5_42}.

\bibitem{golub2013matrix}
{\sc G.~H. Golub and C.~F.~V. Loan}, {\em {Matrix Computations}}, Johns Hopkins University Press, Baltimore, MD, 4th~ed., 2013.

\bibitem{Guenther17}
{\sc T.~G{\"u}nther, M.~Gross, and H.~Theisel}, {\em {Generic Objective Vortices for Flow Visualization}}, ACM Transactions on Graphics (Proc. SIGGRAPH), 36 (2017), pp.~141:1--141:11.


\bibitem{article}
{\sc E.~Haber, L.~Horesh, and L.~Tenorio}, {\em {Numerical methods for experimental design of large-scale linear ill-posed inverse problems}}, Inverse Problems, 24 (2008), p.~055012, \url{https://doi.org/10.1088/0266-5611/24/5/055012}.
\bibitem{Hansen}
{\sc P.~C. Hansen}, {\em {Discrete Inverse Problems: Insight and Algorithms}}, vol.~7 of Fundamentals of Algorithms, Society for Industrial and Applied Mathematics (SIAM), Philadelphia, PA, 2010, \url{https://doi.org/10.1137/1.9780898718836}.
\bibitem{HornJohnson2012}
{\sc R.~A. Horn and C.~R. Johnson}, {\em {Matrix Analysis}}, Cambridge University Press, Cambridge, UK; New York, NY, USA, 2nd~ed., 2012.

\bibitem{kakasenko2025bridginggapdeterministicprobabilistic}
{\sc L.~Kakasenko, A.~Alexanderian, M.~Farazmand, and A.~K. Saibaba}, {\em {Bridging the Gap Between Deterministic and Probabilistic Approaches to State Estimation}}, 2025, \url{https://arxiv.org/abs/2505.04004}, \url{https://arxiv.org/abs/2505.04004}.

\bibitem{kandasamy}
{\sc K.~Kandasamy, G.~Dasarathy, J.~Oliva, J.~Schneider, and B.~P\'{o}czos}, {\em {Gaussian process bandit optimisation with multi-fidelity evaluations}}, in Proceedings of the 30th International Conference on Neural Information Processing Systems, NIPS'16, Red Hook, NY, USA, 2016, Curran Associates Inc., p.~1000–1008.

\bibitem{Khuller}
{\sc S.~Khuller, A.~Moss, and J.~S. Naor}, {\em {The budgeted maximum coverage problem}}, Inf. Process. Lett., 70 (1999), p.~39–45, \url{https://doi.org/10.1016/S0020-0190(99)00031-9}, \url{https://doi.org/10.1016/S0020-0190(99)00031-9}.

\bibitem{Korte2008}
{\sc B.~Korte and J.~Vygen}, {\em The Knapsack Problem}, Springer Berlin Heidelberg, Berlin, Heidelberg, 2008, pp.~439--448, \url{https://doi.org/10.1007/978-3-540-71844-4_17}, \url{https://doi.org/10.1007/978-3-540-71844-4_17}.

\bibitem{JMLR:v9:krause08a}
{\sc A.~Krause, A.~Singh, and C.~Guestrin}, {\em {Near-Optimal Sensor Placements in Gaussian Processes: Theory, Efficient Algorithms and Empirical Studies}}, Journal of Machine Learning Research, 9 (2008), pp.~235--284, \url{http://jmlr.org/papers/v9/krause08a.html}.

\bibitem{biosensors}
{\sc B.~{Kumar Kundu}}, {\em {Chapter 1 - Introduction to sensors and types of biosensors}}, in {Multifaceted Bio-sensing Technology}, L.~Singh, D.~Mahapatra, and S.~Kumar, eds., vol.~4 of Bioelectrochemical Systems: The way forward, Academic Press, 2023, pp.~1--12, \url{https://doi.org/https://doi.org/10.1016/B978-0-323-90807-8.00002-6}, \url{https://www.sciencedirect.com/science/article/pii/B9780323908078000026}.

\bibitem{Leskovec}
{\sc J.~Leskovec, A.~Krause, C.~Guestrin, C.~Faloutsos, J.~Vanbriesen, and N.~Glance}, {\em {Cost-effective outbreak detection in networks}}, Proceedings of the ACM SIGKDD International Conference on Knowledge Discovery and Data Mining, 420-429 (2007), pp.~420--429, \url{https://doi.org/10.1145/1281192.1281239}.

\bibitem{structural_damage}
{\sc J.-F. Lin, Y.-L. Xu, and S.-S. Law}, {\em {Structural damage detection-oriented multi-type sensor placement with multi-objective optimization}}, Journal of Sound and Vibration, 422 (2018), pp.~568--589, \url{https://doi.org/https://doi.org/10.1016/j.jsv.2018.01.047}, \url{https://www.sciencedirect.com/science/article/pii/S0022460X18300695}.

\bibitem{ocean_climate_observing}
{\sc N.~Loose and P.~Heimbach}, {\em {Leveraging Uncertainty Quantification to Design Ocean Climate Observing Systems}}, Journal of Advances in Modeling Earth Systems, 13 (2021), p.~e2020MS002386, \url{https://doi.org/https://doi.org/10.1029/2020MS002386}, \url{https://agupubs.onlinelibrary.wiley.com/doi/abs/10.1029/2020MS002386}.

\bibitem{Marcus1965ASO}
{\sc M.~Marcus and H.~Minc}, {\em {A Survey of Matrix Theory and Matrix Inequalities}}, The American Mathematical Monthly,  (1965), \url{https://api.semanticscholar.org/CorpusID:121044889}.

\bibitem{Minoux}
{\sc M.~Minoux}, {\em {Accelerated greedy algorithms for maximizing submodular set functions}}, in Optimization Techniques, J.~Stoer, ed., Berlin, Heidelberg, 1978, Springer Berlin Heidelberg, pp.~234--243.

\bibitem{noaa_oisst_v2}
{\sc {National Oceanic and Atmospheric Administration (NOAA)}}, {\em {NOAA Optimal Interpolation (OI) Sea Surface Temperature (SST) v2}}.
\newblock \url{https://www.ncei.noaa.gov/products/optimum-interpolation-sst}, 2021.
\newblock Accessed: 2025-06-17.

\bibitem{nemhauser1978analysis}
{\sc G.~L. Nemhauser, L.~A. Wolsey, and M.~L. Fisher}, {\em {An analysis of approximations for maximizing submodular set functions—I}}, Mathematical Programming, 14 (1978), pp.~265--294, \url{https://doi.org/10.1007/BF01588971}.

\bibitem{fluid1}
{\sc T.~Nishida, N.~Ueno, S.~Koyama, and H.~Saruwatari}, {\em {Region-Restricted Sensor Placement Based on Gaussian Process for Sound Field Estimation}}, IEEE Transactions on Signal Processing, 70 (2022), pp.~1718--1733, \url{https://doi.org/10.1109/TSP.2022.3156012}.


\bibitem{cvxrelax}
{\sc J.~Paredes-Ahumada, P.~Ferrer-Cid, J.~M. Barcelo-Ordinas, and J.~Garcia-Vidal}, {\em {Convex Relaxation Method for Sensor Placement in Multiclass Monitoring Networks}}, IEEE Transactions on Instrumentation and Measurement, 73 (2024), pp.~1--13, \url{https://doi.org/10.1109/TIM.2024.3461788}.

\bibitem{Robertazzi}
{\sc T.~G. Robertazzi and S.~C. Schwartz}, {\em {An Accelerated Sequential Algorithm for Producing D-Optimal Designs}}, SIAM Journal on Scientific and Statistical Computing, 10 (1989), pp.~341--358, \url{https://doi.org/10.1137/0910022}, \url{https://doi.org/10.1137/0910022}, \url{https://arxiv.org/abs/https://doi.org/10.1137/0910022}.

\bibitem{5717225}
{\sc M.~Shamaiah, S.~Banerjee, and H.~Vikalo}, {\em {Greedy sensor selection: Leveraging submodularity}}, in 49th IEEE Conference on Decision and Control (CDC), 2010, pp.~2572--2577, \url{https://doi.org/10.1109/CDC.2010.5717225}.

\bibitem{SVIRIDENKO200441}
{\sc M.~Sviridenko}, {\em {A note on maximizing a submodular set function subject to a knapsack constraint}}, Operations Research Letters, 32 (2004), pp.~41--43, \url{https://doi.org/https://doi.org/10.1016/S0167-6377(03)00062-2}, \url{https://www.sciencedirect.com/science/article/pii/S0167637703000622}.

\end{thebibliography}
\end{document}